\theoremstyle{definition}
\newtheorem{theorem}{Theorem}
\newtheorem{lemma}[theorem]{Lemma}
\newcommand\reallywidehat[1]{%
\savestack{\tmpbox}{\stretchto{%
  \scaleto{%
    \scalerel*[\widthof{\ensuremath{#1}}]{\kern-.6pt\bigwedge\kern-.6pt}%
    {\rule[-\textheight/2]{1ex}{\textheight}}
  }{\textheight}%
}{0.5ex}}%
\stackon[1pt]{#1}{\tmpbox}%
}
\begin{document}

\title{H\"older conditions and $\tau$-spikes for analytic Lipschitz functions}
\author{Stephen Deterding \thanks{email: stephen.deterding@westliberty.edu}}

\maketitle

\begin{abstract}
    Let $U$ be an open subset of $\mathbb{C}$ with boundary point $x_0$ and let $A_{\alpha}(U)$ be the space of functions analytic on $U$ that belong to lip$\alpha(U)$, the ``little Lipschitz class''. We consider the condition
    $S= \displaystyle \sum_{n=1}^{\infty}2^{(t+\lambda+1)n}M_*^{1+\alpha}(A_n \setminus U)< \infty,$ where $t$ is a non-negative integer, $0<\lambda<1$, $M_*^{1+\alpha}$ is the lower $1+\alpha$ dimensional Hausdorff content, and $A_n = \{z: 2^{-n-1}<|z-x_0|<2^{-n}\}$. This is similar to a necessary and sufficient condition for bounded point derivations on $A_{\alpha}(U)$ at $x_0$. We show that $S= \infty$ implies that $x_0$ is a $(t+\lambda)$-spike for $A_{\alpha}(U)$ and that if $S<\infty$ and $U$ satisfies a cone condition, then the $t$-th derivatives of functions in $A_{\alpha}(U)$ satisfy a H\"older condition at $x_0$ for a non-tangential approach.
    
\end{abstract}

\section{Introduction}

This paper concerns necessary and sufficient conditions for bounded point derivations on various function spaces. Given a compact subset $X$ in $\mathbb{C}$ and a Banach Space $B$ on $X$, a point $x_0 \in X$ is said to admit a bounded point derivation for $B$ if there exists a constant $C>0$ such that $|f'(x_0)| \leq C ||f||$, for all $f \in B$, where $||\cdot||$ is the norm of the Banach space. Bounded point derivations were originally studied in the case of the space $R(X)$, the uniform closure of rational functions with poles off $X$. An important problem in the theory of rational approximation was to determine conditions for which $R(X) = C(X)$, the space of continuous functions on $X$, because if $R(X) = C(X)$, then every continuous function on $X$ can be uniformly approximated by rational functions with poles off $X$. To solve this problem, the concepts of peak points and non-peak points were developed in papers such as \cite{Bishop, Curtis, Gonchar}. A point $x_0 \in X$ is said to be a peak point for a uniform algebra $A$ on $X$ if there exists $f \in A$ such that $f(x_0) = 1$ and $|f(x)| < 1$ for all other $x \in A$; otherwise it is a non-peak point. Bounded point derivations on $R(X)$ are generalizations of non-peak points, and thus provide information about approximation of derivatives of rational functions. Some of the earlier results in connection with this problem can be found in \cite{Browder} and \cite{Wermer}.  Hence it is of great value to determine necessary and sufficient conditions for bounded point derivations. Moreover, the conditions mentioned in this paper depend only on the point $x_0$ under consideration and the geometry of the set $X$.

\bigskip

Hallstrom was the first to determine necessary and sufficient conditions for bounded point derivations, which he determined for the space $R(X)$, the uniform closure of rational functions with poles outside $X$ \cite{Hallstrom}. These conditions are given in terms of a quantity known as analytic capacity, which is defined as follows. Let $X$ be a compact subset of $\mathbb{C}$. A function $f$ is said to be admissible on $X$ if 

\begin{enumerate}
    \item $|f(z)| \leq 1$ for $z \in \hat{\mathbb{C}} \setminus X$
    \item $f(\infty) = 0$
    \item $f$ is analytic outside $X$
\end{enumerate}

\bigskip \noindent and the analytic capacity of $X$ is denoted by $\gamma(X)$ and defined by 

\begin{equation*}
    \gamma(X) = \sup f'(\infty) = \sup \lim_{z \to \infty} zf(z)
\end{equation*}

\bigskip

\noindent where the supremum is taken over all admissible functions. Hallstrom's conditions for bounded point derivations are summarized in Theorem 1. In it, and throughout the rest of the paper $A_n$ denotes the annulus $\{z: 2^{-n-1} < |z-x_0| < 2^{-n}\}$.

\begin{theorem}

Let $X$ be a compact subset of the complex plane, $x_0 \in X$, and let $t$ be a non-negative integer. Then there exists a bounded point derivation on $R(X)$ at $x_0$ of order $t$ if and only if 

\begin{equation*}
    \sum_{n=1}^{\infty} 2^{(t+1)n} \gamma(A_n \setminus X)< \infty.
\end{equation*}

\end{theorem}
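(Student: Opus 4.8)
The plan is to recast the existence of a bounded point derivation as a statement about Cauchy transforms and then to analyze it annulus by annulus. A bounded point derivation of order $t$ at $x_0$ is precisely the assertion that the functional $\Phi(f) = f^{(t)}(x_0)$, which is defined on the dense subalgebra of rational functions with poles off $X$ (each of which is analytic at $x_0$ since $x_0 \in X$), extends continuously to all of $R(X)$. By Hahn--Banach and the Riesz representation theorem this is equivalent to the existence of a finite measure $\mu$ on $X$ with $\int f\, d\mu = f^{(t)}(x_0)$ for every such $f$. Testing against the Cauchy kernels $f(z) = (z-\lambda)^{-1}$ with $\lambda \notin X$ shows that this in turn is equivalent to producing a finite measure whose Cauchy transform $\hat\mu(\lambda) = \int (w-\lambda)^{-1}\, d\mu(w)$ equals $(-1)^t t!\,(x_0-\lambda)^{-(t+1)}$ for all $\lambda \notin X$. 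This places the problem in the natural setting of analytic capacity, where the relevant quantity is the total variation of a measure with a prescribed Cauchy transform off $X$.

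For the sufficiency direction I would assume $\sum_n 2^{(t+1)n}\gamma(A_n\setminus X) < \infty$ and bound $|\Phi(f)|$ directly. Fix a partition of unity $\{\phi_n\}$ subordinate to the dyadic annular cover of a punctured neighborhood of $x_0$, so that $\phi_n$ lives at scale $2^{-n}$, and apply the Vitushkin localization operator to write $f = \sum_n T_{\phi_n} f$. Each localized piece $T_{\phi_n} f$ is analytic off $A_n \setminus X$, has sup norm controlled by $\|f\|$, and---because the distance from $x_0$ to $A_n$ is comparable to $2^{-n}$ and we are taking a $t$-th derivative---satisfies an estimate of the form $|(T_{\phi_n}f)^{(t)}(x_0)| \le C\, 2^{(t+1)n}\gamma(A_n\setminus X)\,\|f\|$. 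Summing these estimates and invoking the convergence of the series yields $|f^{(t)}(x_0)| \le C\|f\|$, which is the desired bounded point derivation.

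For the necessity direction I would argue by contraposition, constructing functions that defeat any candidate bound when the series diverges. For each $n$ choose an admissible function $g_n$ for $A_n\setminus X$ whose $g_n'(\infty)$ is within a factor of two of $\gamma(A_n\setminus X)$; then $g_n$ lies in $R(X)$ because its singularities lie in the complement of $X$, one has $\|g_n\|\le 1$, and the Cauchy-transform representation of $g_n$ gives $|g_n^{(t)}(x_0)| \ge c\, 2^{(t+1)n}\gamma(A_n\setminus X)$. The point is then to superpose the $g_n$ into a single function of bounded norm whose $t$-th derivatives at $x_0$ add up: I would partition the scales into sparse blocks, align the phases of $g_n^{(t)}(x_0)$ by multiplying each $g_n$ by a unimodular constant, and control the cross terms using the decay $|g_n(z)| \lesssim \gamma(A_n\setminus X)/\operatorname{dist}(z, A_n)$ together with the dyadic separation of the annuli. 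A divergent series then forces these normalized combinations to have bounded norm but arbitrarily large $t$-th derivative, contradicting boundedness of $\Phi$.

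I expect the necessity direction to be the main obstacle. The construction of individual extremal functions is routine, but controlling the sup norm of their superposition while keeping the derivatives in phase is delicate: it requires quantitative control over how the Cauchy transform of an extremal function in one annulus contaminates the neighboring annuli, and a summation-by-blocks argument that respects the subadditivity behaviour of analytic capacity across scales. The precise capacity estimate for the localized $t$-th derivative in the sufficiency half, namely $|(T_{\phi_n}f)^{(t)}(x_0)| \lesssim 2^{(t+1)n}\gamma(A_n\setminus X)\|f\|$, is the other technical heart of the argument and rests on the standard Vitushkin estimates relating localization to analytic capacity.
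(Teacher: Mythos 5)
First, a point of order: the paper never proves this statement. It is Hallstrom's theorem, quoted as background (Theorem 1) with a citation to [Hallstrom]; the paper's own proofs concern only the Lip$\alpha$ analogues (Theorems \ref{main1} and \ref{main2}). So your proposal can only be judged against the classical Melnikov--Hallstrom argument and against the paper's proofs of the analogous results, and judged that way it has two genuine gaps, one in each direction. In the sufficiency half, the two properties you assign to $T_{\phi_n}f$ are incompatible. An element of $R(X)$ is only defined on $X$, so to localize you must either take $f$ rational or replace $f$ by a continuous (Tietze) extension of $f|_X$. If you localize a rational $f$ itself, then $T_{\phi_n}f$ is indeed analytic off $\mathrm{supp}\,\phi_n\cap\mathrm{sing}\,f\subset A_n\setminus X$, but its sup norm is controlled by $\sup_{\mathrm{supp}\,\phi_n}|f|$, not by $\|f\|_X$, and this is infinite precisely because the poles of $f$ sit in the sets $A_n\setminus X$ the theorem is about. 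If instead you localize a Tietze extension $\tilde f$, then $\|T_{\phi_n}\tilde f\|\leq C\|f\|_X$, but $T_{\phi_n}\tilde f$ is now only analytic off $\mathrm{supp}\,\phi_n\setminus\mathrm{int}\,X$, and $\gamma(\mathrm{supp}\,\phi_n\setminus\mathrm{int}\,X)$ is not comparable to $\gamma(A_n\setminus X)$: on a Swiss cheese, where $\mathrm{int}\,X=\emptyset$ but bounded point derivations can exist, that capacity is $\approx 2^{-n}$ and your estimate yields the divergent bound $\sum 2^{tn}$ regardless of the hypothesis. (There is also the unaddressed fact that $\mathrm{supp}\,\phi_n$ must overlap adjacent annuli, while analytic capacity, unlike Hausdorff content, has no elementary subadditivity to split those unions.) Bridging exactly these points -- via Melnikov's lemma on Laurent coefficients and a carefully structured decomposition -- is the real content of Hallstrom's proof, not a routine Vitushkin estimate.

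In the necessity half, the foundation stone is false. For a near-extremal admissible function $g_n$ of $A_n\setminus X$, the quantity $g_n'(\infty)$ controls only the first coefficient at infinity; it gives no lower bound on $g_n^{(t)}(x_0)$, which is determined by the behavior of $g_n$ in the complementary component containing $x_0$. Concretely, if $A_n\setminus X$ contains a circle concentric about $x_0$, the extremal function may be taken to be $\gamma/(z-x_0)$ outside that circle and identically $0$ inside it, so that $g_n'(\infty)=\gamma(A_n\setminus X)\approx 2^{-n}$ while $g_n^{(t)}(x_0)=0$ for every $t$; even writing $g_n$ as the Cauchy transform of a positive measure does not help, since $\int (\zeta-x_0)^{-(t+1)}\,d\sigma(\zeta)$ vanishes for the uniform measure on such a circle. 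The classical proof -- and the paper's proof of its own Theorem \ref{main2}, which is the Lip$\alpha$ analogue of this direction -- avoids pointwise derivative lower bounds at $x_0$ entirely: the only lower-bound input is the coefficient at infinity $\lim_{z\to\infty}zf_n(z)=\int d\nu_n$, the functions are combined in weighted blocks $\sum_{n=N}^{M}2^{(t+\lambda+1)n}f_n$ chosen so the block sums lie between $1$ and $2$, uniform boundedness comes from the decay estimates, and the contradiction is produced by integrating the limit against the hypothetical representing measure. Restructuring your necessity argument along those lines (which also resolves the phase-alignment and cross-term worries you flag) recovers the standard proof; as written, it rests on an inequality that fails.
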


\bigskip

In \cite{O'Farrell1974}, O'Farrell considered the problem of what happens if the integer $t$ in Hallstrom's theorem is replaced with a non-integer. He was able to show two results in opposite directions. The first result concerns sets that satisfy a cone condition. A set $X$ is said to satisfy a cone condition at a point $x_0$ if there exists a cone $\mathscr{C}$ with vertex at $x_0$ and midline $J$ that satisfies the following property: there exists a constant $C>0$ such that if $x\in J$ and $z$ is outside $\mathscr{C}$ then $|x-x_0| \leq C |z-x|$. The midline $J$ is also known as a non-tangential ray to $x_0$ and the limit as $x \to x_0$, $x \in J$ is called a non-tangential limit to $x_0$. O'Farrell's first result shows that replacing the $t$ in Hallstrom's theorem with $t + \lambda$, where $0 < \lambda < 1$, implies a H\"older condition for the $t$-th derivative of $f$, as long as the set $X$ satisfies a cone condition.

\begin{theorem}
\label{O'Farrell1}
Suppose $X$ is a compact subset of $\mathbb{C}$ which satisfies a cone condition at $x_0$ and $J$ is a non-tangential ray to $x_0$. Let $t$ be a non-negative integer and let $0 < \lambda< 1$. If 

\begin{equation}
\label{O'Farrell}
    \sum_{n=1}^{\infty} 2^{(t+\lambda+1)n} \gamma(A_n \setminus X)< \infty
\end{equation}

\bigskip \noindent then there is a constant $C >0$ such that 

\begin{equation*}
    \dfrac{|f^{(t)}(x)-f^{(t)}(x_0)|}{|x-x_0|^{\lambda}} \leq C ||f||_{\infty}
\end{equation*}

\bigskip \noindent for all $x$ in $J$ and $f \in R(X)$.

\end{theorem}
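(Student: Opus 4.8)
The plan is to localize $f$ to the dyadic annuli $A_n$ about $x_0$, to estimate the contribution of each annulus to $f^{(t)}(x)-f^{(t)}(x_0)$ in terms of $\gamma(A_n\setminus X)$, and then to sum. Since the desired bound depends only on $\|f\|_\infty$ and $R(X)$ is the closure of the rational functions with poles off $X$, I may assume $f$ is such a rational function, and after a translation I take $x_0=0$. I would then apply the Vitushkin localization operator $T_\varphi f(z)=\varphi(z)f(z)-\frac{1}{\pi}\int\frac{f(\zeta)\,\bar\partial\varphi(\zeta)}{\zeta-z}\,dA(\zeta)$ together with a smooth partition of unity subordinate to the annuli, with each $\varphi_n$ supported in a fixed dilate $A_n^{*}$ of $A_n$ and $|\bar\partial\varphi_n|\lesssim 2^n$, writing $f=\sum_n f_n$ with $f_n=T_{\varphi_n}f$. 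Each $f_n$ is analytic off $A_n^{*}\setminus X$, vanishes at $\infty$, and satisfies $\|f_n\|_\infty\lesssim\|f\|_\infty$.

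The key per-annulus estimate, which is the quantitative form of Vitushkin's theorem relating analytic capacity to approximation, is that for each non-negative integer $j$ and each point $w$ with $\mathrm{dist}(w,A_n^{*}\setminus X)\gtrsim 2^{-n}$,
\[
|f_n^{(j)}(w)|\le C_j\,\frac{\gamma(A_n^{*}\setminus X)}{\mathrm{dist}(w,A_n^{*}\setminus X)^{\,j+1}}\,\|f\|_\infty .
\]
This is where the cone condition enters. The obstruction set $A_n\setminus X$ lies outside the cone $\mathscr{C}$, so for $x\in J$ and $\zeta\in A_n\setminus X$ the cone condition gives $|x-x_0|\le C|\zeta-x|$; combined with $|\zeta-x_0|\approx 2^{-n}$ this yields $\mathrm{dist}(x,A_n\setminus X)\gtrsim\max(2^{-n},|x-x_0|)$, and for the annuli comparable to or larger than $|x-x_0|$ it gives $|\zeta-x|\approx 2^{-n}$. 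Thus $x$ never approaches the obstruction sets tangentially, and the displayed estimate applies with $w=x$ as well as $w=x_0$ (and at every point of $J$).

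Now let $N$ be the integer with $2^{-N}\approx|x-x_0|$ and split the sum over $n$. For the far annuli $n\le N$ I would bound the difference by the derivative one scale up: by the mean value inequality along the segment $[x_0,x]\subset J$ and the previous display with $j=t+1$, $|f_n^{(t)}(x)-f_n^{(t)}(x_0)|\le C\,|x-x_0|\,2^{(t+2)n}\gamma(A_n^{*}\setminus X)\,\|f\|_\infty$. For the near annuli $n>N$ I would use the triangle inequality together with the display with $j=t$ at both $w=x$ and $w=x_0$, which is dominated by $C\,2^{(t+1)n}\gamma(A_n^{*}\setminus X)\,\|f\|_\infty$. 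Using $|x-x_0|\approx 2^{-N}$, one checks that $|x-x_0|\,2^{(t+2)n}\le|x-x_0|^{\lambda}2^{(t+\lambda+1)n}$ for $n\le N$ and $2^{(t+1)n}\le|x-x_0|^{\lambda}2^{(t+\lambda+1)n}$ for $n>N$, so both pieces are bounded by $|x-x_0|^{\lambda}\sum_n 2^{(t+\lambda+1)n}\gamma(A_n^{*}\setminus X)\,\|f\|_\infty$. Since replacing $A_n$ by the dilate $A_n^{*}$ changes the sum only by a bounded factor, hypothesis (\ref{O'Farrell}) makes this finite, which is precisely the claimed H\"older bound.

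The main obstacle is the per-annulus capacity estimate of the second paragraph: extracting genuinely quantitative control of $|f_n^{(j)}|$ by $\gamma(A_n^{*}\setminus X)$ requires the full strength of Vitushkin's analytic-capacity machinery, namely coefficient and Cauchy-transform bounds for functions analytic off a set of prescribed capacity, and this is where the real work lies. A secondary delicate point is the uniform use of the cone condition near the transition scale $n\approx N$, where one must verify that $x$ stays a definite distance $\gtrsim|x-x_0|$ from $A_n\setminus X$ so that a single estimate governs both the near and the far regimes.
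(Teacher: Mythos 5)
First, a point of orientation: this statement is O'Farrell's theorem, which the paper quotes from \cite{O'Farrell1974} without proof; the closest in-paper argument is the proof of Theorem \ref{main1}, the $A_{\alpha}(U)$ analogue, which proceeds by a contour integral over $\partial(C\cup B_N)$, the factorization lemmas, and the Lord--O'Farrell Cauchy-type estimate (Lemma \ref{Cauchy2}), using the cone condition to make each annulus estimate uniform in $x$. Measured against what a correct proof for $R(X)$ requires, your skeleton is sound --- dyadic decomposition, a per-annulus capacity bound, the split at the critical scale $2^{-N}\approx|x-x_0|$, and the closing arithmetic are all right, as is your use of the cone condition to keep points of $J$ at distance $\gtrsim\max(2^{-n},|x-x_0|)$ from $A_n\setminus X$. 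The genuine gap is at the very first step: the Vitushkin localization of $f$ itself. For $R(X)$ the norm $\|f\|_\infty$ controls $f$ only on $X$, whereas $T_{\varphi_n}f$ integrates $f\,\bar\partial\varphi_n$ over $\mathrm{supp}\,\bar\partial\varphi_n$, which meets $A_n^{*}\setminus X$ --- exactly the set where a rational function with poles off $X$ is uncontrolled. Its poles lie there, so the defining integral need not even converge (a pole of order $\geq 2$ is not locally area-integrable), and even when it does, the claimed bound $\|f_n\|_\infty\lesssim\|f\|_\infty$ is false in general: $\sup_X|f|$ says nothing about $|f|$ on $A_n^{*}\setminus X$. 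The usual repair --- localize a continuous Tietze extension $F$ of $f$ --- also fails, because $\bar\partial F$ is supported on all of $\mathbb{C}\setminus \mathrm{int}\,X$, so the localized pieces are singular across $\partial X\cap A_n^{*}$, whose capacity has nothing to do with $\gamma(A_n\setminus X)$. Note that you have located the difficulty in the wrong place: once pieces $f_n$ analytic off $A_n^{*}\setminus X$ with $\|f_n\|_\infty\lesssim\|f\|_\infty$ exist, your pointwise estimate $|f_n^{(j)}(w)|\leq C_j\,\gamma(A_n^{*}\setminus X)\,\mathrm{dist}(w,A_n^{*}\setminus X)^{-j-1}\|f\|_\infty$ is a short Schwarz-lemma-type argument; the hard part, which your proposal assumes, is manufacturing such a decomposition from boundedness on $X$ alone. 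That is precisely the content of the Melnikov-type estimates in Hallstrom's and O'Farrell's papers (bounds for contour integrals of a function that is analytic and bounded only on $X$ around the low-capacity sets $A_n\setminus X$), and it is what the paper's Lipschitz analogue gets for free from Lemma \ref{Cauchy2}, since there $f\in\mathrm{lip}\,\alpha(\mathbb{C})$ is globally defined.

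A secondary but also genuine gap is the closing remark that ``replacing $A_n$ by the dilate $A_n^{*}$ changes the sum only by a bounded factor.'' Since the dilates straddle adjacent annuli, this needs $\gamma(A_n^{*}\setminus X)\leq C\sum_{|m-n|\leq 1}\gamma(A_m\setminus X)$, i.e.\ semiadditivity of analytic capacity, which is not elementary (it is Tolsa's theorem, far beyond the 1974 toolkit and not something to invoke tacitly). The paper's contour method avoids this issue entirely: the regions $D_n=A_n\setminus C$ tile the domain of integration and each lies inside a single annulus, so only monotonicity of the set function is used. Any repair of your argument should likewise keep each localized object inside one annulus --- which, in effect, pushes you back toward the contour-integral organization that O'Farrell and this paper use.
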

\bigskip

O'Farrell's second result involves representing measures and $\tau$-spikes. Let $\mu$ be a measure and let $ \displaystyle \mu^{\tau}= \int \dfrac{d|\mu(\zeta)|}{|\zeta-z|^{\tau}}$. Then $\mu$ is a representing measure on for a point $x_0 \in X$ on a Banach space $B$ if $\int f d\mu = f(x_0)$ for all $f$ in $B$ and a point $x_0 \in X$ is a $\tau$-spike for $B$ if $\mu^{\tau}(x_0) = \infty$ whenever $\mu$ is a representing measure for $x_0$ on $B$. On $R(X)$ a peak point is a $\tau$-spike for all $\tau>0$. O'Farrell's second result shows that if $x_0$ is a $(t+\lambda)$-spike for $R(X)$ only if \eqref{O'Farrell} holds.

\begin{theorem}
\label{O'Farrell2}
Suppose $X$ is a compact subset of $\mathbb{C}$, $x_0 \in X$,  $0<\lambda<1$ and let $t$ be a non-negative integer. If $\displaystyle \sum_{n=1}^{\infty} 2^{(t+\lambda+1)n} \gamma(A_n \setminus X) = \infty$ then $\mu^{t+\lambda} = \infty$ whenever $\mu$ is a representing measure for $x_0$ on $R(X)$.
\end{theorem}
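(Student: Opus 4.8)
The plan is to deduce the theorem from a single master estimate valid for \emph{every} representing measure $\mu$ for $x_0$ on $R(X)$, namely
\[
\sum_{n}2^{(t+\lambda+1)n}\gamma(A_n\setminus X)\;\le\;C\,\mu^{t+\lambda}(x_0).
\]
Granting this, the hypothesis $\sum_n 2^{(t+\lambda+1)n}\gamma(A_n\setminus X)=\infty$ immediately forces $\mu^{t+\lambda}(x_0)=\infty$, which is exactly the asserted conclusion. The first step toward the master estimate is a routine annular discretization of the potential: since $2^{-n-1}<|\zeta-x_0|<2^{-n}$ on $A_n$, one has $|\zeta-x_0|^{-(t+\lambda)}\asymp 2^{(t+\lambda)n}$ there, so
\[
\mu^{t+\lambda}(x_0)=\int\frac{d|\mu|(\zeta)}{|\zeta-x_0|^{t+\lambda}}\;\asymp\;\sum_{n}2^{(t+\lambda)n}|\mu|(A_n),
\]
an atom of $\mu$ at $x_0$ making the left side infinite outright (so that case is trivial). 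After cancelling the common weight $2^{(t+\lambda)n}$, the master estimate amounts to controlling $2^{n}\gamma(A_n\setminus X)$ by the mass $\mu$ places near scale $2^{-n}$, summed with bounded overlap over a cluster $A_n^{*}:=A_{n-1}\cup A_n\cup A_{n+1}$.

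The mechanism linking the capacities $\gamma(A_n\setminus X)$ to $\mu$ is the Cauchy transform. Applying the representing identity $\int f\,d\mu=f(x_0)$ to the functions $\zeta\mapsto(\zeta-z)^{-1}$, which lie in $R(X)$ for every $z\notin X$, gives
\[
\widehat{\mu}(z):=\int\frac{d\mu(\zeta)}{\zeta-z}=\frac{1}{x_0-z},\qquad z\notin X,
\]
so on $A_n\setminus X$ the transform has modulus $\asymp 2^{n}$, while globally $\bar\partial\widehat{\mu}=-\pi\mu$ in the distributional sense. I would establish the master estimate through the Melnikov--Vitushkin comparison between the annular analytic capacities and this transform: localise $\widehat{\mu}$ to the cluster $A_n^{*}$ by Vitushkin's localisation operator, note that the localised transform must still reproduce the singular behaviour of $(x_0-z)^{-1}$ at scale $2^{-n}$ on the set $A_n\setminus X$, and then invoke the lower bound for the $\mu$-mass carried near scale $2^{-n}$ by a Cauchy transform that is forced to be of size $\asymp 2^{n}$ on a set of analytic capacity $\gamma(A_n\setminus X)$. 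Summing these local bounds, with their bounded overlap, produces the displayed inequality.

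It is worth stressing why the argument cannot be a simple duality pairing, which is also where I expect the real difficulty to lie. Every function admissible for $A_n\setminus X$ is analytic off $A_n\setminus X$, hence analytic on a neighbourhood of $X$, hence a member of $R(X)$; pairing such a function against $\mu$ merely reproduces a point value and yields no information about $\gamma(A_n\setminus X)$. Thus the capacity of $A_n\setminus X$ and the representing measure on $X$ interact only through the ambient Cauchy kernel, and the estimate is genuinely potential-theoretic rather than functional-analytic. The main obstacle is to make the localisation and the capacity comparison uniform over all representing measures, and in particular to handle the interaction across scales: the capacity of a single annulus need not be dominated by the $\mu$-mass of that annulus alone — the atomic measure $\delta_{x_0}$ at a peak point already shows this — so the bound must be summed and the ``singularity budget'' of $\widehat{\mu}$ distributed across the scales. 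This distribution is precisely the content carried by the Melnikov--Vitushkin estimate, and once it is in hand the conclusion $\mu^{t+\lambda}(x_0)=\infty$ for every representing measure $\mu$ follows at once.
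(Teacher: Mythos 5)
Your reduction of the theorem to the ``master estimate'' $\sum_n 2^{(t+\lambda+1)n}\gamma(A_n\setminus X)\le C\,\mu^{t+\lambda}(x_0)$ is fine as logic, but the proposal never proves that estimate: the step you label the ``Melnikov--Vitushkin comparison'' is not a citable result in this form --- it \emph{is} the theorem, in quantitative dress --- so as written the argument is circular. Worse, the mechanism you sketch for it points in an unworkable direction. Vitushkin localisation controls the localised function in terms of $\|\widehat{\mu}\|_\infty$, and here $\widehat{\mu}$ is not bounded (it is only locally integrable and blows up on $\mathrm{supp}\,\mu$), so localisation does not produce admissible functions to compare with capacity. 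Moreover $\bar\partial\widehat{\mu}$ is carried by $\mathrm{supp}\,\mu\subset X$, so localising $\widehat{\mu}$ to the cluster $A_n^{*}$ yields functions analytic off $X\cap A_n^{*}$ and can at best relate $|\mu|(A_n^{*})$ to capacities of pieces of $X$ --- not to $\gamma(A_n\setminus X)$, the capacity of the \emph{complementary} set, which is the quantity in the theorem. Finally, the cross-scale difficulty you yourself flag (for $\mu=\delta_{x_0}$ the identity $\widehat{\mu}(z)=(x_0-z)^{-1}$ on $A_n\setminus X$ is produced entirely by ``far'' mass, so no termwise bound $2^{n}\gamma(A_n\setminus X)\lesssim|\mu|(A_n^{*})$ can hold) is exactly the obstacle your sketch must overcome; ``distributing the singularity budget across scales'' restates that obstacle but does not resolve it.

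The actual proof (O'Farrell's, reproduced in this paper in the Hausdorff-content setting as the proof of Theorem \ref{main2}) runs in the opposite direction: instead of estimating the capacities from above by $\mu$-mass, it uses the capacities to \emph{construct} test functions and derives a contradiction from the assumed finiteness of $\mu^{t+\lambda}$. Concretely: choose $\epsilon_n\to0$ with $\sum_n 2^{(t+\lambda+1)n}\epsilon_n\gamma(A_n\setminus X)=\infty$ and each term at most $1$; for each $N$ pick $M>N$ so the block sum $\sum_{n=N}^{M}$ lies between $1$ and $2$; take nearly extremal admissible functions $f_n$ for $\gamma(A_n\setminus X)$ (in the paper's Lip$\alpha$ analogue, Cauchy transforms of Frostman measures on $A_n\setminus U$), which are analytic off $A_n\setminus X$ and hence lie in the algebra; form $g_N(z)=|z|^{\lambda}z^{t+1}\sum_{n=N}^{M}2^{(t+\lambda+1)n}\epsilon_n f_n(z)$ and prove, by a Melnikov-type summation exploiting the decay of each $f_n$ away from $A_n$, that $\{g_N\}$ is uniformly bounded; then extract a subsequence along which the normalised behaviour at infinity converges, so that $g_N\to\beta|z|^{\lambda}z^{t}$ pointwise with $\beta\neq 0$ pinned between two positive constants by the block normalisation. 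If some representing measure had $\mu^{t+\lambda}<\infty$, then $|z|^{-\lambda}z^{-t}\,d|\mu|$ is a finite measure; the function $L_N=|z|^{-\lambda}g_N$ belongs to the algebra and has a zero of order $t+1$ at $x_0$, so $0=t!\int L_N(z)z^{-t}\,d\mu$, while dominated convergence forces that integral to tend to $t!\,\beta\neq0$ --- a contradiction. The capacities and the measure interact only through these weighted block sums, which is how the cross-scale problem is genuinely handled; your proposal is missing this construction, and with it the core of the proof.
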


\bigskip

For the remainder of the paper, we consider bounded point derivations on $A_{\alpha}(U)$, the space of functions that are analytic on an open set $U$ and belong to the ``little Lipschitz class''.  Let $U$ be an open subset in the complex plane and let $0 < \alpha < 1$. A function $f : U \to \mathbb{C}$ satisfies a Lipschitz condition with exponent $\alpha$ on $U$ if there exists $k>0$ such that for all $z,w \in U$
\begin{equation}
\label{lip condition}
|f(z) - f(w)| \leq k |z-w|^{\alpha}.
\end{equation}

\bigskip

Let Lip$\alpha(U)$ denote the space of functions that satisfy a Lipschitz condition with exponent $\alpha$ on $U$. Lip$\alpha(U)$ is a Banach space with norm given by $||f||_{Lip\alpha(U)} = \sup_{U} |f| + k(f)$, where $k(f)$ is the smallest constant that satisfies \eqref{lip condition}. If we let $||f||_{Lip\alpha(U)}' = k(f)$ then $||f||_{Lip\alpha(U)}'$ is a seminorm on Lip$\alpha(U)$. 

\bigskip

 The little Lipschitz class, lip$\alpha(U)$, is the subspace of Lip$\alpha(U)$ which consists of those functions in Lip$\alpha(U)$ that also satisfy the additional property that for each $\epsilon >0$, there exists $\delta >0$ such that for all $z$, $w$ in $U$, $|f(z)-f(w)| \leq \epsilon |z-w|^{\alpha}$ whenever $|z-w| < \delta$. Lipschitz functions form an important class of functions and much work has been done on approximations of Lipschitz functions by rational functions in papers such as \cite{O'Farrell 1975, O'Farrell 1977a, O'Farrell 1977b}.

 \bigskip
 Let $A_{\alpha}(U)$ denote the space of functions that are analytic on $U$ and belong to lip$\alpha(U)$. A point $x_0\in \overline{U}$ is said to admit a bounded point derivation on $A_{\alpha}(U)$ if there exists a constant $C>0$ such that $|f'(x_0)| \leq C ||f||_{Lip\alpha(\mathbb{C})}$ for all $f \in A_{\alpha}(U)$. In \cite{Lord}, Lord and O'Farrell determined the necessary and sufficient conditions for bounded point derivations on $A_{\alpha}(U)$, which is given in terms of an appropriate Hausdorff content.
 
 The Hausdorff content of a set is defined using measure functions. A measure function is a monotone nondecreasing function $h : [0, \infty) \to [0, \infty)$. For example, $r^{\beta}$ is a measure function for $0 \leq  \beta < \infty$. If $h$ is a measure function then the Hausdorff content $M_h$ associated to $h$ is defined by 

\[ M_h(E) = \inf \sum h(\textnormal{diam } B), \]

\bigskip

\noindent where the infimum is taken over all countable coverings of $E$ by balls and the sum is taken over all the balls in the covering. If $h(r) = r^{\beta}$ then we denote $M_h$ by $M^{\beta}$. The lower $(1+ \alpha)$-dimensional Hausdorff content is denoted by $M^{1+\alpha}_*(E)$ and defined by 

\[ M^{1+\alpha}_*(E) = \sup M_h(E), \]

\bigskip

\noindent where the supremum is taken over all measurable functions $h$ such that $h(r) \leq r^{1+\alpha}$ and $r^{-1-\alpha}h(r)$ converges to $0$ as $r$ tends to $0$. The lower $(1+ \alpha)$-dimensional Hausdorff content is a monotone set function; i.e. if $E \subseteq F$ then $M^{1+\alpha}_*(E) \leq M^{1+\alpha}_*(F)$. The result of Lord and O'Farrell characterizing bounded point derivations on $A_{\alpha}(U)$ in terms of Hausdorff content is summarized in the following theorem.

\begin{theorem}
Let $U$ be an open subset of $\mathbb{C}$, $x_0 \in \partial U$ and let $t$ be a non-negative integer. Then there exists a bounded point derivation of order $t$ on $A_{\alpha}(U)$ if and only if 

\begin{equation}
\label{Lord}
    \sum_{n=1}^{\infty} 2^{(t+1)n} M_*^{1+\alpha}(A_n \setminus U) < \infty.
\end{equation}
\end{theorem}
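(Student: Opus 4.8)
The plan is to follow the template of Hallstrom's argument for Theorem~1, with analytic capacity $\gamma$ replaced throughout by the capacity naturally associated with the little Lipschitz class, which (this is the crucial input) is comparable to the content $M_*^{1+\alpha}$. The starting point is a Cauchy-transform description of the functional $f \mapsto f^{(t)}(x_0)$. Representing $f \in A_{\alpha}(U)$ by the Cauchy--Pompeiu formula gives
\[
    f^{(t)}(x_0) = c_t \int \frac{\overline{\partial} f(\zeta)}{(\zeta - x_0)^{t+1}}\, dA(\zeta),
\]
where $\overline{\partial} f$ is, in the distributional sense, carried by $\mathbb{C}\setminus U$ because $f$ is analytic on $U$. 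Since the weight $(\zeta - x_0)^{-(t+1)}$ has size $\approx 2^{(t+1)n}$ on the annulus $A_n$, this is the origin of the factor $2^{(t+1)n}$ in \eqref{Lord}. I would then fix a smooth dyadic partition of unity $\{\varphi_n\}$ adapted to the $A_n$ and decompose $f = \sum_n T_{\varphi_n} f$ by the Vitushkin localization operator, so that each piece lies in $A_{\alpha}(U)$ with singular support in $A_n \setminus U$ and little-Lipschitz seminorm controlled by $\|f\|$ uniformly in $n$.

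For sufficiency I would bound the contribution of each annulus. The essential estimate is
\[
    \bigl|(T_{\varphi_n} f)^{(t)}(x_0)\bigr| \;\lesssim\; 2^{(t+1)n}\, M_*^{1+\alpha}(A_n \setminus U)\, \|f\|,
\]
obtained by pairing the localized $\overline{\partial} f$ against the weight and estimating the result by the Lipschitz capacity of $A_n \setminus U$, which is comparable to $M_*^{1+\alpha}(A_n\setminus U)$. Summing over $n$ and invoking the hypothesis $\sum_n 2^{(t+1)n} M_*^{1+\alpha}(A_n\setminus U) < \infty$ yields $|f^{(t)}(x_0)| \le C\|f\|$, a bounded point derivation of order $t$.

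For necessity I would argue by contraposition, constructing extremal test functions. For each $n$ the comparison between $M_*^{1+\alpha}(A_n\setminus U)$ and the Lipschitz capacity supplies a function $g_n \in A_{\alpha}(U)$, analytic off $A_n \setminus U$, with uniformly bounded norm and $|g_n^{(t)}(x_0)| \gtrsim 2^{(t+1)n} M_*^{1+\alpha}(A_n\setminus U)$. Assuming \eqref{Lord} fails, I would assemble suitable finite combinations whose contributions at $x_0$ add constructively while the norms stay uniformly bounded, using the quasi-additivity of the capacity across the disjoint annuli; this produces functions of bounded norm but arbitrarily large $t$-th derivative, contradicting the existence of a bounded point derivation.

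The hard part will be supplying the exact substitute for analytic capacity in Hallstrom's scheme: one needs both the comparability of $M_*^{1+\alpha}$ with the capacity dual to the little Lipschitz class and a countable (quasi-)semiadditivity statement for that capacity across the annular decomposition, the Lipschitz analogue of the semiadditivity of analytic capacity. This is the deep geometric-measure-theoretic ingredient underlying both directions, and the concrete construction of the localized extremal functions while keeping them inside the \emph{little} Lipschitz class (so that the vanishing-oscillation condition survives localization) is the most delicate step.
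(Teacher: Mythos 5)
First, a structural point: the paper does not prove this statement at all --- it is quoted (as Theorem~4) from Lord and O'Farrell \cite{Lord}, and the paper's own work (Theorems \ref{main1} and \ref{main2}) only adapts that machinery. So your proposal must be measured against the Lord--O'Farrell method, which is mirrored in the paper's Sections 3 and 4. Your sufficiency half is consistent with it in outline: the per-annulus estimate you want is exactly the Cauchy-type estimate the paper quotes as Lemma \ref{Cauchy2} (the weight $(\zeta-x_0)^{-(t+1)}$ contributing $2^{(t+1)n}$, the set contributing $M_*^{1+\alpha}(A_n\setminus U)$), and that estimate is a proved theorem in \cite{Lord}, not an open input. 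However, one of the two ``deep ingredients'' you flag is a misdiagnosis: no semiadditivity theorem for a Lipschitz capacity is needed anywhere, because Hausdorff contents $M_h$ are countably subadditive by definition --- that is precisely the advantage of the content formulation over analytic capacity. What is actually needed is Frostman's lemma (cited in the paper as \cite[\textbf{2.2}]{Lord}), which converts content into measures whose Cauchy transforms remain in the \emph{little} class thanks to the condition $h(r)/r^{1+\alpha}\to 0$.

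The genuine gaps are in your necessity half. (a) The lower bound $|g_n^{(t)}(x_0)|\gtrsim 2^{(t+1)n}M_*^{1+\alpha}(A_n\setminus U)$ does not follow from comparability of $M_*^{1+\alpha}$ with a capacity normalized at infinity: that comparison controls $|g_n'(\infty)|=\int d\nu_n$, whereas $g_n^{(t)}(x_0)=\pm t!\int(\zeta-x_0)^{-(t+1)}d\nu_n(\zeta)$ can vanish by phase cancellation (take $\nu_n$ rotation-invariant about $x_0$). One must first pass to a sector of $A_n\setminus U$ on which $\arg(\zeta-x_0)^{-(t+1)}$ varies by at most a fixed angle and which carries a fixed fraction of the content (finite subadditivity of $M_h$), and apply Frostman there; this Hallstrom--Melnikov sector device is absent from your sketch. (b) More seriously, your endgame --- ``bounded norm but arbitrarily large $t$-th derivative'' --- is not reachable by constructive assembly. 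A single block $N\le n\le M$ normalized so that $\sum_n 2^{(t+1)n}\epsilon_n M_*^{1+\alpha}(A_n\setminus U)\in[1,2]$ yields derivative $\gtrsim 1$ and norm $\lesssim 1$, which contradicts nothing; stacking $k$ disjoint blocks makes the derivative grow like $k$, but the annulus-by-annulus estimate (exactly as in the uniform-boundedness computation in the proof of Theorem \ref{main2}) shows the norm bound also grows like $k$, so the ratio never blows up. The actual proofs close the argument with a limiting device instead: the test functions converge boundedly and pointwise to a nonzero multiple of $z^{t}$ (the $k_N'(\infty)\to\beta$ step in Section 4, which works because the masses $\int d\nu_n$ are \emph{positive} reals and cannot cancel), and this limit is then paired against a representation of the point-derivation functional --- a Hahn--Banach extension measure in the $R(X)$ case, the hypothesized representing measure in Theorem \ref{main2}, and the corresponding dual-space argument in \cite{Lord} --- producing a contradiction of the form $0=\lim L_N^{(t)}(0)=\beta\neq 0$. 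Without this limiting argument, or a substitute for it, your contraposition does not close.
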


\bigskip

A natural question to ask is what is the significance of a non-integer $t$ in \eqref{Lord}. In analogy with $R(X)$, results similar to Theorem \ref{O'Farrell1} and Theorem \ref{O'Farrell2} should hold for $A_{\alpha}(U)$. In this paper we prove the following results.

\begin{theorem}
\label{main1}
Suppose $U$ is an open subset of $\mathbb{C}$ which satisfies a cone condition at $x_0$ and $J$ is a non-tangential ray to $x_0$. Let $t$ be a non-negative integer and let $0 < \lambda< 1$. If

\begin{equation*}
\sum_{n=1}^{\infty} 2^{(t+1+\lambda)n} M_*^{1+\alpha}(A_n \setminus U) < \infty
\end{equation*}

\bigskip

\noindent then there is a constant $C >0$ such that 

\begin{equation*}
    \dfrac{|f^{(t)}(x)-f^{(t)}(x_0)|}{|x-x_0|^{\lambda}} \leq C ||f||_{Lip\alpha(\mathbb{C})}
\end{equation*}

\bigskip

\noindent for all $x \in J$ and $f \in A_{\alpha}(U)$.

\end{theorem}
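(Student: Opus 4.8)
The plan is to follow O'Farrell's proof of Theorem \ref{O'Farrell1}, replacing analytic capacity by the content $M_*^{1+\alpha}$ and the uniform norm by the Lip$\alpha$ seminorm. First note that since $2^{(t+1)n}\le 2^{(t+1+\lambda)n}$, the hypothesis forces the Lord--O'Farrell condition \eqref{Lord}, so a bounded point derivation of order $t$ exists and $f^{(t)}(x_0)$ is well defined for every $f\in A_\alpha(U)$. Regarding $f$ as an element of lip$\alpha(\mathbb{C})$, analyticity on $U$ means the distribution $\bar\partial f$ is supported in $\mathbb{C}\setminus U$. Using the Cauchy--Green representation of $f$ localized about $x_0$, and writing $\Phi_x(\zeta)=(\zeta-x)^{-(t+1)}-(\zeta-x_0)^{-(t+1)}$, I would obtain, for $x\in J$ near $x_0$,
\begin{equation*}
f^{(t)}(x)-f^{(t)}(x_0)=\frac{(-1)^t\,t!}{\pi}\int \bar\partial f(\zeta)\,\Phi_x(\zeta)\,dA(\zeta)=\frac{(-1)^t\,t!}{\pi}\sum_n\int_{A_n}\bar\partial f\,\Phi_x\,dA,
\end{equation*}
and then estimate the integral annulus by annulus.

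The essential tool is a localized content estimate of the form $\int_{A_n}|\bar\partial f|\,dA\le C\,\|f\|'_{Lip\alpha}\,M_*^{1+\alpha}(A_n\setminus U)$. This is dimensionally natural, since a lip$\alpha$ function contributes $\bar\partial f\sim \|f\|'\,\mathrm{length}^{\alpha-1}$ and integrating over the part of $\mathbb{C}\setminus U$ inside a dyadic annulus produces exactly the factor $M_*^{1+\alpha}(A_n\setminus U)$; it is the estimate underlying the Lord--O'Farrell theorem. Establishing it in the precise localized form I need -- most cleanly by applying a Vitushkin localization operator $T_\varphi$ with $\varphi$ a smooth bump adapted to $A_n$ and bounding the resulting Cauchy coefficient by the content -- is the main obstacle. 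Two technical points demand care: because $\alpha<1$ the object $\bar\partial f$ is only a distribution, so I would first prove the bound for mollified $f$ and pass to the limit; and because $M_*^{1+\alpha}$ is a \emph{lower} content defined as a supremum over measure functions, one must argue through its dual description rather than through a single fixed covering.

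With that estimate in hand the rest is geometric-series bookkeeping, and the cone condition supplies the geometry. Put $|x-x_0|\approx 2^{-N}$ and $a_n=2^{(t+1+\lambda)n}M_*^{1+\alpha}(A_n\setminus U)$, so $\sum_n a_n<\infty$. Since the support of $\bar\partial f$ lies in $\mathbb{C}\setminus U$, which sits outside the cone $\mathscr{C}$, the cone condition gives $|\zeta-w|\gtrsim|\zeta-x_0|$ for every such $\zeta$ and every $w$ on the segment from $x_0$ to $x$, and in particular $|\zeta-x|\gtrsim|x-x_0|$. For the far annuli $n\le N$ this yields the telescoping bound $|\Phi_x(\zeta)|\lesssim |x-x_0|\,2^{n(t+2)}$, so these terms contribute
\begin{equation*}
\lesssim \|f\|'\,|x-x_0|\sum_{n\le N}2^{n(t+2)}M_*^{1+\alpha}(A_n\setminus U)=\|f\|'\,|x-x_0|\sum_{n\le N}a_n\,2^{n(1-\lambda)}\lesssim \|f\|'\,|x-x_0|\,2^{N(1-\lambda)}\approx \|f\|'\,|x-x_0|^{\lambda}.
\end{equation*}
For the near annuli $n>N$ I would instead bound the two kernels separately, $|\Phi_x(\zeta)|\lesssim 2^{n(t+1)}+2^{N(t+1)}$; both resulting sums, namely $\sum_{n>N}a_n 2^{-n\lambda}$ and $2^{N(t+1)}\sum_{n>N}a_n 2^{-n(t+1+\lambda)}$, are dominated by $2^{-N\lambda}\sum_n a_n\approx |x-x_0|^{\lambda}$. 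Adding the two ranges gives $|f^{(t)}(x)-f^{(t)}(x_0)|\le C\,\|f\|'_{Lip\alpha}\,|x-x_0|^{\lambda}$, and since the seminorm is dominated by $\|f\|_{Lip\alpha(\mathbb{C})}$ this is the asserted H\"older condition.
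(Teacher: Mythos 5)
Your global architecture (dyadic annuli, cone condition to control the kernels, geometric-series bookkeeping split at $2^{-N}\approx|x-x_0|$) is sound, and your final summation is arithmetically correct. But the proof hinges entirely on the ``localized content estimate'' $\int_{A_n}|\bar\partial f|\,dA\le C\,\|f\|'_{Lip\alpha}\,M_*^{1+\alpha}(A_n\setminus U)$, and that statement is not a known lemma deferred for technical reasons --- it is false in general, because $\bar\partial f$ need not be a measure at all. Concretely, take $W$ a Weierstrass-type nowhere-differentiable function in lip$\alpha(\mathbb{R})$, $\chi$ a smooth bump, and $f(x+iy)=W(x)\chi(x,y)$: then $f\in$ lip$\alpha(\mathbb{C})$, $f$ is analytic off the (content-positive) support of $\chi$, yet $\bar\partial f$ has no locally finite total variation (if it did, $W$ would be of bounded variation on intervals, hence differentiable a.e.). Mollifying does not help: $\int|\bar\partial f_\epsilon|\,dA$ blows up as $\epsilon\to 0$ in this example, so no limit passage recovers your inequality. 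What \emph{is} true --- and is the actual engine of the paper, its Lemma \ref{Cauchy2} (the Lord--O'Farrell Cauchy-type theorem) --- is a bound on the \emph{pairing}: a contour integral of $f\varphi$, equivalently $\langle\bar\partial f,\varphi\rangle$, is bounded by $\kappa\,M_*^{1+\alpha}(\Omega\cap S)\,\|f\varphi\|'_{Lip\alpha(\Omega)}$. The crucial difference is that the Lipschitz seminorm of the \emph{whole product} appears, so the oscillation (derivative) of your kernel $\Phi_x$ enters, not just $\sup_{A_n}|\Phi_x|$; your clean factorization $\sup|\Phi_x|\cdot(\text{mass of }\bar\partial f)$ is exactly the step that cannot be made.

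Once the key estimate is stated in pairing form, the remaining work is to control $\|f\Phi_x\|'_{Lip\alpha}$ on each annulus, and that is where the bulk of the paper's proof lives: it normalizes $f(x_0)=0$ so that $|f(w)|\le\|f\|'_{Lip\alpha}|w|^\alpha$ (this cancels the extra power coming from differentiating the kernel), uses the algebraic factorization identities (Lemmas \ref{factor} and \ref{factor2}) to split $\Phi$-differences, and uses the cone condition to trade $|z-x|$ for $|z|$, arriving at the per-annulus bound $C\,2^{n(t+1+\lambda)}\|f\|'_{Lip\alpha(\mathbb{C})}\,M_*^{1+\alpha}(A_n\setminus U)$, after which the hypothesis sums the series. (The paper also avoids your distributional-representation issues at the boundary point by a different device: by density of $A_\alpha(U\cup\{x_0\})$ it assumes $f$ analytic at $x_0$ and works with an honest contour integral over $\partial(C\cup B_N)$, $D_n=A_n\setminus C$, rather than a Cauchy--Green area integral.) So your outline is salvageable and would land on essentially the paper's proof, but as written it has a genuine gap: the one lemma you declared to be ``the estimate underlying the Lord--O'Farrell theorem'' is not that estimate, and in its stated form it fails.
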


\bigskip

\begin{theorem}
\label{main2}
Suppose $U$ is an open subset of $\mathbb{C}$, $x_0 \in \Bar{U}$, $0<\lambda<1$, and let $t$ be a non-negative integer. Also let $\mu^{\beta}(z)= \int \dfrac{d|\mu(\zeta)|}{|\zeta-z|^{\beta}}$. If $\displaystyle \sum_{n=1}^{\infty} 2^{(t+\lambda+1)n} M_*^{1+\alpha}(A_n \setminus U) = \infty$ then $\mu^{t+\lambda} = \infty$ whenever $\mu$ is a representing measure for $x_0$ on $A_{\alpha}(U)$.
\end{theorem}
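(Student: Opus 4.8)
The plan is to prove the contrapositive: assuming that $\mu$ is a representing measure for $x_0$ on $A_{\alpha}(U)$ with $\mu^{t+\lambda}(x_0)<\infty$, I would show that the series $\sum_n 2^{(t+\lambda+1)n}M_*^{1+\alpha}(A_n\setminus U)$ converges. The first, routine, step is to rewrite the hypothesis geometrically: since $|\zeta-x_0|\approx 2^{-n}$ on $A_n$, one has $\mu^{t+\lambda}(x_0)\approx\sum_n 2^{(t+\lambda)n}|\mu|(A_n)$, so the assumption is equivalent to $\sum_n 2^{(t+\lambda)n}|\mu|(A_n)<\infty$. Comparing with the target series, it therefore suffices to establish, for each $n$, a local comparison of the form $2^n M_*^{1+\alpha}(A_n\setminus U)\lesssim|\mu|(\widetilde A_n)$, where $\widetilde A_n=A_{n-1}\cup A_n\cup A_{n+1}$; summing this against the weight $2^{(t+\lambda)n}$ then yields the conclusion.

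The engine for the local comparison is the Cauchy transform of $\mu$. Plugging the test functions $\zeta\mapsto(\zeta-w)^{-1}$, which lie in $A_{\alpha}(U)$ for every $w\notin\overline U$, into the defining identity $\int f\,d\mu=f(x_0)$ shows that $\widehat\mu(w):=\int(\zeta-w)^{-1}\,d\mu(\zeta)=(x_0-w)^{-1}$ for all $w\notin\overline U$, so in particular $|\widehat\mu(w)|\geq 2^n$ on $A_n\setminus\overline U$. On the other side, I would invoke the Frostman-type description of the lower content: for each $n$ there is a positive measure $\nu_n$ supported on $E_n:=A_n\setminus U$ with $\nu_n(B(z,r))\leq h_n(r)$ for a measure function $h_n\leq r^{1+\alpha}$ satisfying $r^{-1-\alpha}h_n(r)\to0$, and with total mass $m_n:=\nu_n(E_n)\approx M_*^{1+\alpha}(E_n)$. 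The little-$o$ growth is exactly what guarantees that $\widehat\nu_n$ belongs to $\mathrm{lip}\,\alpha$. Integrating $|\widehat\mu|\geq 2^n$ against $\nu_n$ on $A_n\setminus\overline U$ and applying Fubini gives
\[
2^n\,\nu_n(A_n\setminus\overline U)\;\le\;\int_{E_n}|\widehat\mu|\,d\nu_n\;\le\;\int p_n\,d|\mu|,\qquad p_n(\zeta):=\int_{E_n}\frac{d\nu_n(w)}{|\zeta-w|}.
\]
A standard layer-cake estimate using $\nu_n(B(z,r))\le\min(r^{1+\alpha},m_n)$ bounds the potential uniformly by $p_n\lesssim m_n^{\alpha/(1+\alpha)}$, and since $m_n\le(2^{-n})^{1+\alpha}\le1$ this is harmless: once the $|\mu|$-integral is localized to $\widetilde A_n$ it yields $2^n m_n^{1/(1+\alpha)}\lesssim|\mu|(\widetilde A_n)$, whence $2^n m_n\lesssim|\mu|(\widetilde A_n)$.

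The hard part will be the localization, that is, replacing $\int p_n\,d|\mu|$ by its restriction to $\widetilde A_n$. The potential $p_n$ does not vanish away from $A_n$; for $\zeta\in A_k$ with $k$ far from $n$ one only has the decay $p_n(\zeta)\lesssim m_n\,2^{\min(k,n)}$, and feeding this crude bound into the weighted sum produces tail terms that fail to converge once $t+\lambda\ge 1+\alpha$. To remedy this I would not pair $\mu$ against the global function $\widehat\nu_n$ but against a Vitushkin-type localization of it to $\widetilde A_n$; such an operator preserves analyticity off $E_n$ and the $\mathrm{lip}\,\alpha$ bound (the Lipschitz analogue of Vitushkin's construction used throughout the O'Farrell theory), so that the contribution of $\mu$ outside $\widetilde A_n$ is genuinely removed. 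A secondary technical point is that $\nu_n$ may charge $\partial U\cap A_n$, where the identity $\widehat\mu(w)=(x_0-w)^{-1}$ is unavailable; this is handled by verifying $\nu_n(A_n\setminus\overline U)\gtrsim m_n$, e.g. by checking that the content carried by the part of $E_n$ on which the identity holds still dominates $M_*^{1+\alpha}(E_n)$ up to a constant. With the localized estimate $2^n M_*^{1+\alpha}(A_n\setminus U)\lesssim|\mu|(\widetilde A_n)$ in hand, multiplying by $2^{(t+\lambda)n}$ and summing gives $\sum_n 2^{(t+\lambda+1)n}M_*^{1+\alpha}(A_n\setminus U)\lesssim\sum_n 2^{(t+\lambda)n}|\mu|(\widetilde A_n)\lesssim\mu^{t+\lambda}(x_0)<\infty$, completing the contrapositive.
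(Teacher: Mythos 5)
Your proposal contains two genuine gaps, both located exactly at the points you yourself flag as needing work, and in both cases the proposed fix does not survive scrutiny. First, the Vitushkin-type localization cannot do what you want, because for your pairing function it is the identity operator. The function you pair against $\mu$ is the Cauchy potential $\widehat\nu_n$, whose $\bar\partial$-derivative is a constant multiple of $\nu_n$, supported in $E_n\subset A_n$. If $\varphi$ is any cutoff with $\varphi\equiv 1$ on a neighborhood of $A_n$, then $\bar\partial(T_\varphi\widehat\nu_n)=\varphi\,\bar\partial\widehat\nu_n=\bar\partial\widehat\nu_n$, so $T_\varphi\widehat\nu_n-\widehat\nu_n$ is entire, bounded (the Frostman growth makes $\widehat\nu_n$ bounded), and vanishes at $\infty$; by Liouville, $T_\varphi\widehat\nu_n=\widehat\nu_n$. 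More fundamentally, any function analytic off $E_n$, vanishing at $\infty$, with $\lim_{z\to\infty}z\,g(z)=m_n\neq 0$ must have tails of size exactly $m_n/|z|$: the slow decay is forced by the very quantity $m_n\approx M_*^{1+\alpha}(A_n\setminus U)$ you are trying to capture, so no localization can remove the contribution of $\mu$ far from $A_n$. The tail problem is in fact curable, but by a different mechanism you did not invoke: the standing hypothesis $\mu^{t+\lambda}<\infty$ itself gives $\sum_{k<n}2^k|\mu|(A_k)+2^n\sum_{k>n}|\mu|(A_k)=o(2^n)$, so for all $n$ large (depending on $\mu$) the tail terms can be absorbed into the left-hand side $2^n m_n$ of your inequality, and the finitely many bad $n$ contribute a finite amount to the weighted sum. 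That absorption argument, not localization, is what your skeleton is missing.

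Second, the point you call a "secondary technical point" is not minor, and the claim you propose to verify, $\nu_n(A_n\setminus\overline U)\gtrsim m_n$, is false in general. The Frostman measure lives on $E_n=A_n\setminus U$, which can be \emph{entirely} contained in $\partial U$: whenever the complement of $U$ has empty interior near $x_0$ (slit domains, Swiss-cheese--type domains --- the central examples in this whole subject), one has $A_n\setminus\overline U=\emptyset$ while $M_*^{1+\alpha}(A_n\setminus U)>0$, so $\nu_n(A_n\setminus\overline U)=0$ and your lower bound $2^n\nu_n(A_n\setminus\overline U)$ is vacuous; the pointwise identity $\widehat\mu(w)=(x_0-w)^{-1}$ is unavailable precisely at the points carrying the content. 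The standard repair is to abandon that pointwise identity and instead pair $\mu$ directly with $\widehat\nu_n$, which does belong to $A_{\alpha}(U)$ (it is analytic off $E_n$, hence on $U$, and the little-$o$ Frostman growth gives lip$\alpha$), so that the representing property yields $\int\widehat\nu_n\,d\mu=\widehat\nu_n(x_0)$; but then one must control phase cancellation in $\widehat\nu_n(x_0)=-\int w^{-1}\,d\nu_n(w)$, which requires first replacing $A_n$ by one of boundedly many sectors carrying a fixed fraction of the content (Melnikov's sector trick). With these two repairs your contrapositive, Melnikov-style comparison could likely be pushed through, but note that the paper avoids both difficulties entirely by a different, softer argument: it sums $2^{(t+\lambda+1)n}f_n$ over a block of annuli normalized so that $1\leq\sum_{n=N}^{M}2^{(t+\lambda+1)n}\epsilon_n M_*^{1+\alpha}(A_n\setminus U)\leq 2$, extracts a normal-families limit of the form $\beta|z|^{\lambda}z^{t}$ with $\beta\neq 0$, and tests against $\mu$ by dominated convergence (using $\mu^{t+\lambda}<\infty$) to reach the contradiction $0=t!\,\beta$.
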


\bigskip

The next section contains some preliminary lemmas that are used in the proofs of Theorems \ref{main1} and \ref{main2}. In Section 3 we prove Theorem \ref{main1} and in Section 4 we prove Theorem \ref{main2}.

\section{Preliminary lemmas}

Throughout the remainder of the paper, we make use of the following factorization lemma.

\begin{lemma}
\label{factor}
For complex numbers $a$ and $b$ and positive integer $n$,

\begin{equation*}
    a^n - b^n = (a-b)(a^{n-1}+a^{n-2}b + \cdots + ab^{n-2}+  b^{n-1}).
\end{equation*}

\end{lemma}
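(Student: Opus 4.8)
The plan is to prove this standard identity by expanding the right-hand side and observing that the resulting sum telescopes. Writing the second factor compactly as $\sum_{k=0}^{n-1} a^{n-1-k} b^k$, I would distribute $(a-b)$ across the sum to obtain
\[
a\sum_{k=0}^{n-1} a^{n-1-k} b^k - b\sum_{k=0}^{n-1} a^{n-1-k} b^k = \sum_{k=0}^{n-1} a^{n-k} b^k - \sum_{k=0}^{n-1} a^{n-1-k} b^{k+1}.
\]
First I would reindex the second sum by setting $j = k+1$, turning it into $\sum_{j=1}^{n} a^{n-j} b^{j}$. Then the term $k=0$ in the first sum contributes $a^n$, the term $j=n$ in the reindexed second sum contributes $-b^n$, and every remaining monomial $a^{n-k}b^k$ with $1 \le k \le n-1$ appears once with a plus sign and once with a minus sign, so these cancel in pairs. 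What survives is exactly $a^n - b^n$, establishing the claim.

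An alternative I would keep in reserve is induction on $n$. The base case $n=1$ reads $a-b=(a-b)\cdot 1$, and for the inductive step I would use the splitting $a^{n+1}-b^{n+1} = a(a^n-b^n) + b^n(a-b)$, apply the inductive hypothesis to the factor $a^n - b^n$, and then collect the common factor $(a-b)$ to recover the formula with $n$ replaced by $n+1$.

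There is no genuine obstacle here, as the identity is elementary. The only point requiring a little care is the bookkeeping in the reindexing step: one must check that the two interior sums share the same index range so that all intermediate monomials cancel and precisely the two boundary terms $a^n$ and $-b^n$ remain.
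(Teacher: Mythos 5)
Your proof is correct: the telescoping computation is exactly right, the reindexing $j=k+1$ is handled properly, and the boundary terms $a^n$ and $-b^n$ are the only survivors; the inductive alternative is also sound. For comparison, the paper gives no proof of this lemma at all --- it is stated as a standard, well-known identity and used as a black box (notably in the proof of Lemma \ref{factor2}) --- so your argument simply supplies the elementary verification the paper took for granted.
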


\bigskip

We will also make use of the following closely related lemma which is less well known.

\begin{lemma}
\label{factor2}
Let $a$ and $b$ be non-zero complex numbers and let $n$ be a negative integer. Then

\begin{equation*}
    a^n-b^n = \dfrac{b-a}{ab} \cdot (a^{n+1}+ a^{n+2}b^{-1} + \ldots + b^{n+1}).
\end{equation*}
\end{lemma}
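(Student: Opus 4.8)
The plan is to reduce the claimed identity to the ordinary factorization of Lemma \ref{factor} by passing to reciprocals. Since $n$ is a negative integer, I would write $n = -m$ with $m$ a positive integer, and introduce $c = 1/a$ and $d = 1/b$; these are well-defined precisely because $a$ and $b$ are assumed nonzero.

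First I would record two elementary identities in the new variables. The left-hand side becomes $a^n - b^n = a^{-m} - b^{-m} = c^m - d^m$, which merely rewrites negative powers of $a,b$ as positive powers of $c,d$. The scalar prefactor becomes $\frac{b-a}{ab} = \frac{1}{a} - \frac{1}{b} = c - d$, a one-line common-denominator computation.

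Next I would translate the long factor $a^{n+1} + a^{n+2}b^{-1} + \cdots + b^{n+1}$ into the variables $c$ and $d$. A generic summand has the shape $a^{n+1+j}b^{-j}$ for $j = 0, 1, \ldots, m-1$; using $n = -m$ this equals $a^{-(m-1-j)}b^{-j} = c^{m-1-j}d^{j}$, and as $j$ runs from $0$ to $m-1$ this produces exactly $c^{m-1} + c^{m-2}d + \cdots + d^{m-1}$, with the endpoints $a^{n+1}$ and $b^{n+1}$ mapping to $c^{m-1}$ and $d^{m-1}$ respectively. Substituting these three observations, the right-hand side of the asserted identity becomes $(c-d)(c^{m-1} + c^{m-2}d + \cdots + d^{m-1})$, which is precisely $c^m - d^m$ by Lemma \ref{factor}, and this in turn equals $a^n - b^n$, completing the proof.

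The only step requiring any care is matching the exponents in this middle translation: one must verify that the generic term $a^{n+1+j}b^{-j}$ really corresponds to $c^{m-1-j}d^{j}$ and that the two endpoints map as claimed. Once the indexing is pinned down, the conclusion is immediate from the already-established Lemma \ref{factor}, so I do not anticipate any genuine difficulty beyond this bookkeeping.
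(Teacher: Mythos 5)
Your proof is correct and is essentially the paper's own argument: both pass to reciprocals ($c=1/a$, $d=1/b$ versus the paper's $z=a^{-1}$, $w=b^{-1}$) and apply Lemma \ref{factor} to the resulting positive-power difference. The only difference is cosmetic — you write $n=-m$ and track the exponent bookkeeping more explicitly than the paper does.
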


\begin{proof}
Let $z=a^{-1}$ and $w = b^{-1}$. Then it follows from Lemma \ref{factor} that

\begin{align*}
    a^n-b^n &= z^{-n}-w^{-n}\\
    &=(z-w)(z^{-n-1}+ z^{-n-2}w + \ldots + w^{-n-1})\\
    &=\dfrac{b-a}{ab} \cdot (a^{n+1}+ a^{n+2}b^{-1} + \ldots + b^{n+1}).
\end{align*}

\end{proof}

\bigskip

Another key lemma is the following Cauchy type theorem for Lipschitz functions which appears in the paper of Lord and O'Farrell \cite[pg.110]{Lord}.

\begin{lemma}
\label{Cauchy2}

Let $\Gamma$ be a piecewise analytic curve bounding a region $\Omega \in \mathbb{C}$, and suppose that $\Gamma$ is free of outward pointing cusps. Let $0 < \alpha < 1$ and suppose that $f \in$ lip$\alpha(\mathbb{C})$ is analytic outside a closed region $S$. Then there exists a constant $\kappa >0$ such that

\begin{equation*}
\left| \int f(z)dz\right| \leq \kappa \cdot M_{*}^{1+\alpha}(\Omega \cap S) \cdot ||f||'_{Lip\alpha(\Omega)}.
\end{equation*}

\bigskip

\noindent The constant $\kappa$ only depends on $\alpha$ and the equivalence class of $\Gamma$ under the action of the conformal group of $\mathbb{C}$. In particular this means that $\kappa$ is the same for any curve obtained from $\Gamma$ by rotation or scaling.

\end{lemma}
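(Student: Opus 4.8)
The plan is to reduce the contour integral to an area integral of $\bar\partial f$ supported on $\Omega\cap S$ via the complex form of Green's theorem, and then to estimate this area integral against a measure function $h$ built from the little-Lipschitz modulus of $f$. The conceptual heart is that the admissibility requirement $h(r)/r^{1+\alpha}\to 0$ in the definition of the \emph{lower} content $M_*^{1+\alpha}$ is matched exactly by the defining property of lip$\alpha$; the little-Lipschitz hypothesis is precisely what upgrades a bound by the ordinary content $M^{1+\alpha}$ to one by $M_*^{1+\alpha}$.

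First I would regularize, setting $f_\delta = f*\varphi_\delta$ for a smooth approximate identity $\varphi_\delta$. Since $f_\delta$ is $C^1$ and $f_\delta\to f$ uniformly on the compact curve $\Gamma$, Green's theorem gives
\begin{equation*}
\int_\Gamma f\,dz = \lim_{\delta\to0}\int_\Gamma f_\delta\,dz = \lim_{\delta\to0} 2i\iint_\Omega \bar\partial f_\delta\,dA.
\end{equation*}
Write $\eta(r)=\sup_{0<|z-w|\le r}|f(z)-f(w)|/|z-w|^\alpha$, so that $\eta$ is nondecreasing, $\eta(r)\le \|f\|'_{Lip\alpha(\Omega)}$, and $\eta(r)\to0$ as $r\to0$ because $f\in$ lip$\alpha$. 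Then $h(r):=\eta(r)\,r^{1+\alpha}/\|f\|'_{Lip\alpha(\Omega)}$ is a nondecreasing measure function with $h(r)\le r^{1+\alpha}$ and $h(r)/r^{1+\alpha}\to0$, hence admissible in the supremum defining $M_*^{1+\alpha}$. Because $M_*^{1+\alpha}(E)=\sup_h M_h(E)\ge M_h(E)$, it suffices to prove the estimate with $M_*^{1+\alpha}(\Omega\cap S)$ replaced by $M_h(\Omega\cap S)$ for this single $h$.

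Next I would fix an arbitrary countable covering $\{D_j=D(a_j,r_j)\}_{j\ge1}$ of $\Omega\cap S$ and a subordinate partition of unity: functions $\psi_j$ supported in $2D_j$ with $|\bar\partial\psi_j|\le C/r_j$, together with $\psi_0$ supported off $S$, such that $\sum_{j\ge0}\psi_j=1$ on $\Omega$ (after thinning the covering to bounded overlap). Since $\bar\partial f_\delta$ is supported within distance $\delta$ of $S$, the term $\psi_0\,\bar\partial f_\delta$ vanishes for small $\delta$, and expanding $1=\sum_{j\ge0}\psi_j$ in the area integral leaves
\begin{equation*}
\int_\Gamma f_\delta\,dz = \sum_{j\ge1} 2i\iint_\Omega \psi_j\,\bar\partial f_\delta\,dA.
\end{equation*}
For each $j$ I would subtract the constant $c_j=f(a_j)$ and integrate by parts, obtaining
\begin{equation*}
\iint_\Omega \psi_j\,\bar\partial f_\delta\,dA = \frac{1}{2i}\int_{\Gamma\cap 2D_j}\psi_j(f_\delta-c_j)\,dz \;-\; \iint_\Omega(\bar\partial\psi_j)(f_\delta-c_j)\,dA.
\end{equation*}
Both terms are governed by the oscillation of $f$ on $2D_j$, which is at most $\eta(2r_j)(2r_j)^\alpha$: the area term is $\lesssim r_j^{-1}\cdot\eta(r_j)r_j^\alpha\cdot r_j^2=\eta(r_j)r_j^{1+\alpha}$, and the boundary term is bounded by $\mathrm{length}(\Gamma\cap 2D_j)\cdot\eta(r_j)r_j^\alpha$. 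Summing over $j$, letting $\delta\to0$, and taking the infimum over coverings would yield $|\int_\Gamma f\,dz|\lesssim \|f\|'_{Lip\alpha(\Omega)}\,M_h(\Omega\cap S)\le \|f\|'_{Lip\alpha(\Omega)}\,M_*^{1+\alpha}(\Omega\cap S)$.

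The main obstacle is the boundary term, i.e. the disks $D_j$ that meet $\Gamma$. Controlling $\mathrm{length}(\Gamma\cap 2D_j)$ by $Cr_j$ \emph{with a constant uniform over all positions and scales of the disks} is exactly where the hypotheses that $\Gamma$ is piecewise analytic and free of outward-pointing cusps enter: an outward cusp would let $\Gamma$ accumulate disproportionate length inside small disks and destroy the estimate. This uniform geometric bound, together with the scale- and rotation-invariance of every estimate above, is also what forces the final constant $\kappa$ to depend only on $\alpha$ and on the conformal equivalence class of $\Gamma$.
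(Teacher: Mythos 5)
First, a point of comparison: the paper does not prove this lemma at all --- it is imported verbatim from Lord and O'Farrell \cite[pg.~110]{Lord} --- so there is no argument in the paper to match yours against, and your attempt must be judged on its own merits. Its skeleton is the right one (the standard Vitushkin-type localization scheme: regularize, apply Green's theorem, localize $\bar\partial f_\delta$ with a partition of unity, and estimate each piece by the oscillation of $f$), and your central observation is genuinely the heart of the matter: the defining property of lip$\alpha$, namely $\eta(r)\to 0$, is exactly what makes $h(r)=\eta(r)r^{1+\alpha}/\|f\|'_{Lip\alpha(\Omega)}$ admissible in the supremum defining $M_*^{1+\alpha}$, so a bound by $M_h$ for this single $h$ upgrades automatically to a bound by the lower content. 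That reduction, and the integration by parts against $f-f(a_j)$, are correct in outline.

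However, there are concrete gaps. (1) The claim that ``$\psi_0\,\bar\partial f_\delta$ vanishes for small $\delta$'' fails as stated: your disks cover only $\Omega\cap S$, so $\mathrm{supp}\,\psi_0$ can be kept away from a neighborhood of $\Omega\cap S$ but not of $S$ itself. When $S$ meets $\Gamma$ or approaches it from outside --- which the lemma permits, and which is the delicate case --- $\bar\partial f_\delta$ has mass inside $\Omega$ lying within $\delta$ of $S\setminus\overline{\Omega}$, covered neither by the $D_j$ nor killed by $\psi_0$. This stray term needs its own estimate (for instance: $|\bar\partial f_\delta|\le C\|f\|'\delta^{\alpha-1}$ pointwise, and the offending set lies in a $\delta$-collar of $\Gamma$ of area $\lesssim\delta\cdot\mathrm{length}(\Gamma)$, giving a contribution $O(\delta^{\alpha})\to 0$); without something of this kind the argument does not close. (2) A doubling problem: your oscillation bounds involve $\eta$ at dilated radii (the oscillation on $2D_j$ is controlled by $\eta(4r_j)(4r_j)^{\alpha}$), and $\eta$, hence $h$, need not be doubling; thus the quantity you actually control, $\sum_j \eta(4r_j)r_j^{1+\alpha}$, is not dominated by $\sum_j h(\mathrm{diam}\,D_j)$ for your $h$. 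This is fixable --- define instead $h(s)=\eta(Cs)(s/C')^{1+\alpha}/\|f\|'$, which is still admissible --- but as written the chain of inequalities breaks. (3) ``After thinning the covering to bounded overlap'' conceals a real construction: an arbitrary countable cover need not admit a bounded-overlap subfamily that still covers, and dilating via a $5r$-covering lemma runs back into problem (2); one needs a Harvey--Polking-type partition lemma here. Finally, your localization of the cusp hypothesis is misplaced: $\mathrm{length}(\Gamma\cap 2D_j)\le Cr_j$ holds for every piecewise analytic curve, cusps or not (an analytic arc behaving like $t\mapsto t^k$ still has length $O(r)$ in $r$-disks). The exclusion of outward-pointing cusps, and the dependence of $\kappa$ only on the conformal class of $\Gamma$, must earn their keep elsewhere --- precisely in the boundary-contact analysis of point (1), which is the part your sketch treats most lightly.
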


\bigskip 

The next lemma is an immediate corollary of the cone condition that is applicable in a wide variety of situations.

\begin{lemma}
\label{cone}
Suppose that $J$ is a non-tangential ray to $x_0$ in a cone $\mathscr{C}$, $x \in J$ and $z$ is outside $\mathscr{C}$, then for all positive integers $t$, there exists a constant $C >0$ depending only on $t$ such that 

\begin{equation*}
    \dfrac{1}{|z-x|^t} \leq \dfrac{C}{|z-x_0|^{t}}.
\end{equation*}
\end{lemma}

\begin{proof}
Since $x$ lies on $J$, which is a non-tangential ray to $x_0$, there exists a constant $K>0$ such that for $z \notin \mathscr{C}$, $\dfrac{|x-x_0|}{|z-x|} \leq K$. Thus for $z \notin U$, $\dfrac{|z-x_0|}{|z-x|} \leq 1 + \dfrac{|x-x_0|}{|z-x|} \leq 1 + K$. Hence $\dfrac{1}{ |z-x|^{t}} \leq \dfrac{(1+ K)^t}{|z-x_0|^{t}}$.
\end{proof}

\bigskip

Finally, we will need the following decay lemma which was first proved by Lord and O'Farrell \cite[pg.109]{Lord}.

\begin{lemma}
\label{decay}
Let $\alpha$ be such that $0 < \alpha < 1$, let $K$ be a compact subset of $\mathbb{C}$ and let $f \in $ Lip$\alpha(\mathbb{C})$ be analytic outside $K$ 
and vanish at $\infty$. Then there is a constant $C$ depending on $\alpha$ but not on $K$ or $f$ such that the following estimates hold.

\begin{enumerate}
    \item $||f||_{\infty} \leq C ||f||'_{Lip\alpha(\mathbb{C})} \cdot M_*^{1+\alpha}(K)^{\frac{\alpha}{1+\alpha}}$
    \bigskip
    \item For $z \notin K$, $|f(z)| \leq \dfrac{C ||f||'_{Lip\alpha(\mathbb{C})} \cdot M_*^{1+\alpha}(K)}{\text{dist}(z,K)}$
\end{enumerate}

\end{lemma}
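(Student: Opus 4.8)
The plan is to reduce both inequalities to the Cauchy-type estimate of Lemma \ref{Cauchy2}, by writing the relevant quantity as a contour integral of a lip$\alpha$ function that is analytic off $K$ and then bounding that integral by $M_*^{1+\alpha}(K)$. Throughout write $M = M_*^{1+\alpha}(K)$ and $d = \operatorname{dist}(z,K)$, and let $\Gamma$ denote a piecewise-analytic curve (a circle, say) enclosing $K$ with $z$ in its unbounded complementary component, bounding a region $\Omega \supseteq K$.

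I would establish the pointwise estimate (2) first, since it can be obtained without reference to the sup-norm bound. For $z \notin K$ the key observation is that
\begin{equation*}
g(\zeta) = \frac{f(\zeta) - f(z)}{\zeta - z}
\end{equation*}
has a removable singularity at $\zeta = z$ (because $f$ is analytic near $z$), so $g \in \operatorname{lip}\alpha(\mathbb{C})$ and $g$ is analytic off $K$. Since $f$ vanishes at infinity, a residue/deformation computation gives $\oint_\Gamma \frac{d\zeta}{\zeta - z} = 0$ and $\oint_\Gamma \frac{f(\zeta)}{\zeta - z}\,d\zeta = -2\pi i\, f(z)$, whence $f(z) = -\frac{1}{2\pi i}\oint_\Gamma g(\zeta)\,d\zeta$. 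Applying Lemma \ref{Cauchy2} to $g$ with $S = K$ and $\Omega \cap S = K$ yields $|f(z)| \le \frac{\kappa}{2\pi} M\,\|g\|'_{Lip\alpha(\Omega)}$. It then remains to estimate the seminorm of $g$: expanding the difference quotient $g(\zeta_1) - g(\zeta_2)$ over the common denominator and using $|\zeta_i - z| \ge d$ on $\Gamma$ together with the Lipschitz bound $|f(\zeta_1) - f(\zeta_2)| \le \|f\|' |\zeta_1 - \zeta_2|^\alpha$, I expect $\|g\|'_{Lip\alpha(\Omega)} \lesssim \|f\|'/d$ once $\Gamma$ is taken at the scale $d$, which gives estimate (2).

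For estimate (1) I would balance scales using (2). Set $r = M^{1/(1+\alpha)}$, so that $\|f\|' M / r = \|f\|' r^\alpha = \|f\|' M^{\alpha/(1+\alpha)}$. If the supremum is essentially attained at a point at distance $\ge r$ from $K$, then (2) already gives the bound. Otherwise a near-maximizing point $w_0$ lies within $r$ of $K$, and comparing $f(w_0)$ with the value of $f$ at a nearby point $z^*$ at distance about $r$ from $K$ via the little-Lipschitz modulus, $|f(w_0) - f(z^*)| \le \|f\|'|w_0 - z^*|^\alpha \lesssim \|f\|' r^\alpha$, together with the bound $|f(z^*)| \lesssim \|f\|' M/r$ from (2), produces $\|f\|_\infty \lesssim \|f\|' M^{\alpha/(1+\alpha)}$.

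The main obstacle is ensuring that the contour estimate depends only on the content $M$ and not on the diameter of $K$. The seminorm computation for $g$ is clean only when $\operatorname{diam}(K) \lesssim d$; when $K$ is spread out over scales much larger than $d$, hugging $K$ by $\Gamma$ forces $\operatorname{diam}(\Omega) \gg d$ and the naive estimate of $\|g\|'_{Lip\alpha(\Omega)}$ acquires a spurious factor of $(\operatorname{diam}(K)/d)^{1-\alpha}$. Removing this requires a Vitushkin-type localization of $f$ at scale $d$: writing $f = \sum_j f_j$ where each $f_j$ is analytic off a piece of $K$ of diameter comparable to $d$, applying the above to each $f_j$, and recombining while controlling the contents of the pieces by $M$. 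This localization --- and the verification that the localized functions remain in $\operatorname{lip}\alpha$ with seminorms controlled by $\|f\|'$ --- is where the real work lies; it is also consistent with the fact that a set of zero lower $(1+\alpha)$-content is removable for analytic $\operatorname{lip}\alpha$ functions, so that both bounds correctly force $f \equiv 0$ when $M = 0$.
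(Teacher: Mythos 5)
First, a point of orientation: the paper never proves this lemma at all --- it is quoted from Lord and O'Farrell \cite[p.~109]{Lord} --- so there is no in-paper argument to compare against, and your proposal has to stand on its own. It does not yet stand, for two reasons.

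The first gap is the one you acknowledge but do not fill. Your contour argument for estimate (2) is fine in the regime $\operatorname{diam}(K)\lesssim d=\operatorname{dist}(z,K)$: the identity $f(z)=-\frac{1}{2\pi i}\oint_\Gamma\frac{f(\zeta)-f(z)}{\zeta-z}\,d\zeta$ is correct, and the seminorm of $g(\zeta)=\frac{f(\zeta)-f(z)}{\zeta-z}$ over $\Omega$ is then $O(\|f\|'_{Lip\alpha(\mathbb{C})}/d)$. But the general case \emph{is} the lemma, and deferring it to ``a Vitushkin-type localization'' names the difficulty without resolving it. Moreover, the recombination step you describe --- ``controlling the contents of the pieces by $M$'' --- is not available in the form you suggest, because Hausdorff content is badly non-additive under partition: cutting a disk of radius $D$ into grid squares of side $d$ produces pieces whose contents sum to about $(D/d)^{1-\alpha}M_*^{1+\alpha}(K)$, not $M_*^{1+\alpha}(K)$. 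A correct proof must absorb this loss into the decay $\operatorname{dist}(z,K_j)^{-1}$ of the far pieces by a dyadic summation (the factor $2^{k(1-\alpha)}$ coming from the pieces in the annulus of radius $2^k d$ is beaten by the factor $2^{-k}$ precisely because $\alpha>0$), and it must also verify that the localization operator preserves the Lipschitz seminorm. None of this is in the sketch. A smaller issue of the same kind: Lemma \ref{Cauchy2} is stated for functions in lip$\alpha(\mathbb{C})$, while the present lemma assumes only $f\in$ Lip$\alpha(\mathbb{C})$, so applying it to $g$ requires either upgrading the hypothesis or checking that the Cauchy estimate survives for the big class.

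The second gap you do not flag: the deduction of (1) from (2) fails as stated. You assume that a near-maximizing point $w_0$ lying within $r=M^{1/(1+\alpha)}$ of $K$ has a companion $z^*$ with $|w_0-z^*|\lesssim r$ and $\operatorname{dist}(z^*,K)\gtrsim r$. Such a point need not exist. Take $K$ to be the union of disks of radius $\varepsilon=r^{(3+\alpha)/(1+\alpha)}$ centered at the points of an $r$-spaced grid in the unit square; subadditivity of $M_*^{1+\alpha}$ and a Frostman-measure computation give $M_*^{1+\alpha}(K)\approx r^{1+\alpha}$, so this $K$ has exactly the content scale $r$, yet \emph{every} point of the unit square lies within distance $r$ of $K$. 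Reaching a point at distance $\gtrsim r$ from $K$ then requires travelling a distance $\approx 1$, and the Lipschitz comparison yields only $|f(w_0)|\lesssim\|f\|'_{Lip\alpha(\mathbb{C})}$ rather than the required $\|f\|'_{Lip\alpha(\mathbb{C})}\,r^{\alpha}$. For such $K$ the conclusion of the lemma is a genuinely nontrivial near-removability statement, and no argument that moves $z$ by $O(r)$ and applies (2) once can prove it. The repair is the same localization as above: your scale-balancing is valid for each localized piece, whose diameter is $\lesssim r$ so that the companion point does exist, and one then sums over pieces with the dyadic bookkeeping just described. In short, you have correctly identified the easy regime and the right tools, but both estimates remain unproven exactly where the lemma has content.
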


\section{A H\"older condition for derivatives}

In this section we present the proof of Theorem \ref{main1}.

\begin{proof}

To prove Theorem \ref{main1}, we first note that by translation invariance we may suppose that $x_0 = 0$. Moreover by replacing $f$ by $f-f(0)$ if needed, we may suppose that $f(0) = 0$. In addition, we may suppose that $U$ is contained in the ball $\{z:|z| < \frac{1}{2}\}$.

\bigskip

\begin{figure}
\begin{center}
\begin{tikzpicture}

\tikzset{->-/.style={decoration={
  markings,
  mark=at position .5 with {\arrow[scale=2]{>}}},postaction={decorate}}}

\draw [->-,thick,domain=-150:150] plot ({cos(\x)}, {sin(\x)});
   \draw [->-, thick,domain=150:210] plot ({4* cos(\x)}, { 4*sin(\x)});
   
   \draw [->-, thick](-0.866025, 0.5) to (-3.464102, 2);
   \draw [->-, thick] (-3.464102, -2) to (-0.866025, -0.5);

   \filldraw[fill=black, draw=black] (0,0) circle (.08 cm);
   \node [below] (x) at (0, 0){$0$};

   \node [left] (x) at (-4,0){C};
   \node [right] (x) at (1,0){$B_N$};
   \filldraw[fill=black, draw=black] (-2,0) circle (.08 cm);
   \node [below] (x) at (-2, 0){$x$};

\end{tikzpicture}
\begin{tikzpicture}

\tikzset{->-/.style={decoration={
  markings,
  mark=at position .5 with {\arrow[scale=2]{>}}},postaction={decorate}}}

\draw [thick,domain=-150:150] plot ({cos(\x)}, {sin(\x)});
   \draw [  thick,domain=150:210] plot ({4* cos(\x)}, { 4*sin(\x)});
    \draw [ ->-, thick,domain=-150:150] plot ({2* cos(\x)}, { 2*sin(\x)});
     \draw [ ->-, thick,domain=-150:150] plot ({2.5* cos(-\x)}, { 2.5*sin(-\x)});
     
     \draw [->-, thick](-1.732051, 1) to (-2.1650635, 1.25);
     
     \draw [->-, thick] (-2.1650635, -1.25) to (-1.732051, -1);
   \draw (-0.866025, 0.5) to (-3.464102, 2);
   \draw  (-3.464102, -2) to (-0.866025, -0.5);
   \filldraw[fill=black, draw=black] (0,0) circle (.08 cm);
   \node [below] (x) at (0, 0){$0$};
   \node [below] (x) at (0, -1.9){$D_n$};

   \node [left] (x) at (-4,0){C};
   \node [right] (x) at (1,0){$B_N$};
   \filldraw[fill=black, draw=black] (-2,0) circle (.08 cm);
   \node [below] (x) at (-2, 0){$x$};

\end{tikzpicture}

\caption{The contour of integration}
\label{fig3}
\end{center}
\end{figure}
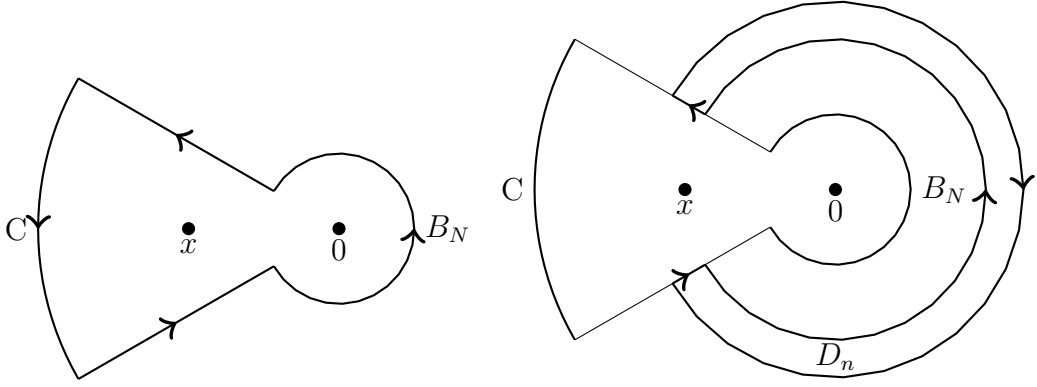

\bigskip

  It is a result of Lord and O'Farrell \cite[Lemma 1.1]{Lord} that $A_{\alpha}(U \cup \{x_0\})$ is dense in $A_{\alpha}(U)$. (Note that in this paper what we refer to as $A_{\alpha}(U)$ is denoted by $a(U)$.) Hence we may suppose that $f$ is analytic at $0$ and thus there is a neighborhood $\Omega$ of $0$ such that $f$ is analytic on $\Omega$. We can further suppose that $U \subseteq \Omega$. Let $B_n$ denote the ball centered at $0$ with radius $2^{-n}$. Then there exists an integer $N>0$ such that $\Omega$ contains $B_N$ and hence $f$ is analytic inside the ball $B_N$. Since $J$ is a non-tangential ray to $0$, it follows that there is a sector in $U$ with vertex at $0$ that contains $J$. Let $C$ denote this sector. It follows from the Cauchy integral formula that
  
  \begin{equation*}
      f^{(t)}(x)-f^{(t)}(0) = \frac{t!}{2 \pi i} \int_{\partial(C \bigcup B_N)} \dfrac{f(z)}{(z-x)^{t+1}} - \dfrac{f(z)}{z^{t+1}}dz
  \end{equation*}
  
  \bigskip \noindent where the boundary is oriented so that the interior of $C \bigcup B_N$ lies always to the left of the path of integration. (See Figure \ref{fig3}.) We can factor an $x$ out of the integrand.

\begin{align*}
f^{(t)}(x)-f^{(t)}(0) &= \frac{t!}{2 \pi i} \int_{\partial(C \bigcup B_N)} \dfrac{f(z)}{(z-x)^{t+1}} - \dfrac{f(z)}{z^{t+1}}dz\\
&= \frac{t!}{2 \pi i} \int_{\partial(C \bigcup B_N)} \dfrac{f(z) \cdot (z^{t+1}-(z-x)^{t+1})}{z^{t+1}(z-x)^{t+1}} dz\\
&= \frac{t!}{2 \pi i} \int_{\partial(C \bigcup B_N)} \dfrac{f(z) \cdot (z-(z-x)) \cdot (z^{t} + z^{t-1}(z-x) + \ldots + (z-x)^{t}) }{z^{t+1}(z-x)^{t+1}} dz\\
&= \frac{t!}{2 \pi i} \int_{\partial(C \bigcup B_N)} f(z) \cdot x \sum_{k=0}^{t}  z^{k-t-1} (z-x)^{-k-1} dz.\\
\end{align*}

\bigskip

\noindent Thus

\begin{equation*}
    \dfrac{f^{(t)}(x)-f^{(t)}(0)}{x^{\lambda}} = \frac{t! x^{1-\lambda}}{2 \pi i} \int_{\partial(C \bigcup B_N)} f(z) \sum_{k=0}^{t}  z^{k-t-1} (z-x)^{-k-1} dz.\\
\end{equation*}

\bigskip

\noindent  Let $D_n = A_n \setminus C$. Then

\begin{align*}
\dfrac{f^{(t)}(x)-f^{(t)}(0)}{x^{\lambda}} &= \frac{t! x^{1-\lambda}}{2 \pi i} \sum_{n=1}^{N} \int_{\partial D_n} f(z) \sum_{k=0}^{t}  z^{k-t-1} (z-x)^{-k-1} dz\\
&+ \frac{t! x^{1-\lambda}}{2 \pi i} \int_{|z|=\frac{1}{2}} f(z) \sum_{k=0}^{t}  z^{k-t-1} (z-x)^{-k-1} dz.
\end{align*}

\bigskip

\noindent We first bound the second integral. Since $x$ lies on $J$, which is a non-tangential ray to $x_0$, it follows from Lemma \ref{cone} that $|z-x|^{-k-1} \leq C |z|^{-k-1}$. Thus by the triangle inequality, 

\begin{align}
\label{Mterm}
    \left|\frac{t! x^{1-\lambda}}{2 \pi i} \int_{|z|=\frac{1}{2}} f(z) \sum_{k=0}^{t}  z^{k-t-1} (z-x)^{-k-1} dz \right| &\leq \dfrac{t! |x|^{1-\lambda}}{2 \pi} \int_{|z|=\frac{1}{2}}\sum_{k=0}^{t}|z|^{-t-2}dz \\
    &\leq C \dfrac{t! |x|^{1-\lambda}}{2 \pi} (t+1) 2^{t+2} \sup_{U} f \leq C \sup_{U} f. \nonumber 
\end{align}

\bigskip

\noindent To bound the sum, we note that since $ \displaystyle x^{1-\lambda} f(z) \sum_{k=0}^{t}  z^{k-t-1} (z-x)^{-k-1}$ is analytic on $D_n \setminus U$ for $M \leq n \leq N$, an application of Lemma \ref{Cauchy2} shows that 

\begin{align}
\label{intbound2}
\left| x^{1-\lambda} \int_{\partial D_n} f(z) \sum_{k=0}^{t}  z^{k-t-1} (z-x)^{-k-1} dz \right| \leq \kappa M^{1+\alpha}(D_n \setminus U) \cdot \left \Vert x^{1-\lambda} f(z) \sum_{k=0}^{t}  z^{k-t-1} (z-x)^{-k-1}  \right \Vert _{Lip\alpha(D_n)}'.
\end{align}

\bigskip

\noindent Recall that the constant $\kappa$ is the same for curves in the same equivalence class. Since the regions $D_n$ differ from each other by a scaling it follows that $\kappa$ doesn't depend on $n$ in \eqref{intbound2}. The remainder of the proof is to show that $\displaystyle \left \Vert x^{1-\lambda} f(z) \sum_{k=0}^{t}  z^{k-t-1} (z-x)^{-k-1} \right \Vert _{Lip\alpha(D_n)}'$ can be bounded by a constant independent of $f$ and $x$. It follows from the definition of the Lipschitz seminorm that

\begin{align}
\label{lipnorm}
  &\left  \Vert x^{1-\lambda}  f(z) \sum_{k=0}^{t}  z^{k-t-1} (z-x)^{-k-1}  \right \Vert _{Lip\alpha(D_n)}'\\
  &= \sup_{z, w \in D_n; z \neq w} \dfrac{|x^{1-\lambda} f(z) \sum_{k=0}^{t}  z^{k-t-1} (z-x)^{-k-1} - x^{1-\lambda} f(w) \sum_{k=0}^{t}  w^{k-t-1} (w-x)^{-k-1}| }{|z-w|^{\alpha}} \nonumber 
\end{align}

\bigskip

\noindent and it follows from an application of the triangle inequality that \eqref{lipnorm} is bounded by

\begin{align}
    & \sup_{z, w \in D_n; z \neq w} \dfrac{| x^{1-\lambda} (f(z)-f(w)) \sum_{k=0}^{t}  z^{k-t-1} (z-x)^{-k-1}|}{|z-w|^{\alpha}}+ \label{long1}\\
    & \sup_{z, w \in D_n; z \neq w}  \dfrac{|x^{1-\lambda}|\cdot|f(w)| \cdot| \sum_{k=0}^{t}  z^{k-t-1} (z-x)^{-k-1}-   w^{k-t-1} (w-x)^{-k-1}| }{|z-w|^{\alpha}}\label{long2}.
\end{align}

\bigskip

\noindent We can determine upper bounds for both \eqref{long1} and \eqref{long2}. To bound \eqref{long1} we recall that 

\begin{equation*}
    ||f||'_{Lip_{\alpha}(D_n)}= \sup_{z, w \in D_n; z \neq w} \dfrac{|f(z)-f(w)|}{|z-w|^{\alpha}}.
\end{equation*}

\bigskip

\noindent It follows from Lemma \ref{cone} that since $x$ is on a non-tangential ray and $z \in D_n$ and $|z-x|^{-k-\lambda} \leq C |z|^{-k-\lambda}$, and it follows from the cone condition that $|x^{1-\lambda}| \leq C |z-x|^{1-\lambda}$. Hence 

\begin{align}
\label{1bound}
\dfrac{| x^{1-\lambda} (f(z)-f(w)) \sum_{k=0}^{t}  z^{k-t-1} (z-x)^{-k-1}|}{|z-w|^{\alpha}} &\leq C \dfrac{|f(z)-f(w)|\cdot |z-x|^{1-\lambda} \cdot |\sum_{k=0}^{t}  z^{k-t-1} (z-x)^{-k-1}|}{|z-w|^{\alpha}} \nonumber\\
&\leq C ||f||'_{Lip_{\alpha}(D_n)} |z|^{-t-1-\lambda} \leq C 2^{n(t+1+ \lambda)} ||f||'_{Lip_{\alpha}(\mathbb{C})}.
\end{align}

\bigskip

Now we bound \eqref{long2}. By the triangle inequality, this is bounded by 

\begin{align}
    &\sup_{z, w \in D_n; z \neq w}  \dfrac{|x^{1-\lambda} f(w)| \cdot| \sum_{k=0}^{t}  (z^{k-t-1} -   w^{k-t-1}) (z-x)^{-k-1}| }{|z-w|^{\alpha}} \label{short1} \\ &+   \dfrac{|x^{1-\lambda} f(w)| \cdot| \sum_{k=0}^{t}  w^{k-t-1} ((z-x)^{-k-1}-  (w-x)^{-k-1})| }{|z-w|^{\alpha}} \label{short2}.
\end{align}

\bigskip

 We first obtain a bound for \eqref{short1}. Since $z,w \in D_n$, $|z|\leq 2 |w|$ and $|w| \leq 2 |z|$. Also it follows from Lemma \ref{cone} that for all integers $m<0$, $|z-x|^{m} \leq C |z|^{m}$ and it follows from the cone condition that $|x^{1-\lambda}| \leq C |z-x|^{1-\lambda}$. Hence it follows from Lemma \ref{factor2} that

\begin{align*}
   &\sup_{z, w \in D_n; z \neq w}  \dfrac{|x^{1-\lambda}f(w)| \cdot| \sum_{k=0}^{t}  (z^{k-t-1} -   w^{k-t-1}) (z-x)^{-k-1}| }{|z-w|^{\alpha}} \\
   &\leq \sup_{z, w \in D_n; z \neq w} \dfrac{|x^{1-\lambda}f(w)| \cdot| \sum_{k=0}^{t}  (w-z)(z^{k-t}+z^{k-t+1}w^{-1} + \ldots + w^{k-t}) (z-x)^{-k-1}| }{|z|\cdot|w| \cdot |z-w|^{\alpha}} \\
    &\leq \sup_{z, w \in D_n; z \neq w} |f(w)| \cdot|z-w|^{1-\alpha} \sum_{k=0}^{t}  (|z|^{k-t-1}|w|^{-1}+|z|^{k-t}|w|^{-2} + \ldots + |z|^{-1}|w|^{k-t-1}) |z-x|^{-k-\lambda}  \\
    &\leq \sup_{z, w \in D_n; z \neq w} C |f(w)| \cdot|z-w|^{1-\alpha} |w|^{-t-2-\lambda}. 
\end{align*}

\bigskip

\noindent Since $f(0) =0$, it follows that for $w \in \mathbb{C}$, $ \dfrac{|f(w)|}{|w|^{\alpha}} \leq ||f||'_{Lip\alpha(\mathbb{C})}$. Hence

\begin{align*}
  \sup_{z, w \in D_n; z \neq w} |f(w)| \cdot|z-w|^{1-\alpha} |w|^{-t-2-\lambda} \leq \sup_{z, w \in D_n; z \neq w} ||f||'_{Lip\alpha(\mathbb{C})} \cdot|z-w|^{1-\alpha} |w|^{-t-2-\lambda+\alpha}. \\
\end{align*}

\bigskip

\noindent Since $z$ and $w$ both belong to $D_n$, $|z-w| \leq C |w|$ and hence  

\begin{align*}
    \sup_{z, w \in D_n; z \neq w} ||f||'_{Lip\alpha(\mathbb{C})} \cdot|z-w|^{1-\alpha} |w|^{-t-2-\lambda+\alpha} \leq  C 2^{n(t+1+\lambda)} ||f||'_{Lip\alpha(\mathbb{C})}.
\end{align*}

\bigskip

\noindent Thus 

\begin{equation}
\label{short1bound}
    \sup_{z, w \in D_n; z \neq w}  \dfrac{|x^{1-\lambda}f(w)| \cdot| \sum_{k=0}^{t}  (z^{k-t-1} -   w^{k-t-1}) (z-x)^{-k-1}| }{|z-w|^{\alpha}} \leq C 2^{n(t+1+\lambda)}||f||'_{Lip\alpha(\mathbb{C})}.
\end{equation}

\bigskip

 We next obtain a bound for \eqref{short2}. Since $z,w \in D_n$, $|z|\leq 2 |w|$, and $|w|\leq 2|z|$. Also it follows from Lemma \ref{cone} that for all integers $m <0$,  $|z-x|^{m} \leq C |z|^{m}$ and $|w-x|^{m} \leq C |w|^{m}$ and it follows from the cone condition that $|x^{1-\lambda}|\leq C |z-x|^{1-\lambda}$. Hence it follows from Lemma \ref{factor2} that

\begin{align*}
   &\sup_{z, w \in D_n; z \neq w}  \dfrac{|x^{1-\lambda} f(w)| \cdot| \sum_{k=0}^{t}  w^{k-t-1} ((z-x)^{-k-1}-  (w-x)^{-k-1})| }{|z-w|^{\alpha}} \\
   &\leq  \dfrac{|f(w)|\cdot |z-x|^{1-\lambda} \cdot| \sum_{k=0}^{t} w^{k-t-1} (z-w)\cdot[(z-x)^{-k}+(z-x)^{-k+1}(w-x)^{-1} + \ldots + (w-x)^{-k}] | }{|z-x|\cdot |w-x|\cdot |z-w|^{\alpha}} \\
    &\leq  \dfrac{|f(w)|}{|z-w|^{\alpha-1}}  \sum_{k=0}^{t} |w|^{k-t-1} (|z-x|^{-k-\lambda}|w-x|^{-1}+|z-x|^{-k+1-\lambda}|w-x|^{-2} + \ldots + |z-x|^{-\lambda}|w-x|^{-k-1})   \\
    &\leq \sup_{z, w \in D_n; z \neq w} C |f(w)| \cdot|z-w|^{1-\alpha} |w|^{-t-2-\lambda} 
\end{align*}

\bigskip

\noindent Since $f(0) =0$ it follows that for $w \in \mathbb{C}$, $ \dfrac{|f(w)|}{|w|^{\alpha}} \leq ||f||'_{Lip\alpha(\mathbb{C})}$. Hence

\begin{align*}
  \sup_{z, w \in D_n; z \neq w} |f(w)| \cdot|z-w|^{1-\alpha} |w|^{-t-2-\lambda} \leq \sup_{z, w \in D_n; z \neq w} ||f||'_{Lip\alpha(\mathbb{C})} \cdot|z-w|^{1-\alpha} |w|^{-t-2-\lambda+\alpha}. \\
\end{align*}

\bigskip

\noindent Since $z$ and $w$ both belong to $D_n$, $|z-w| \leq C |w|$ and hence  

\begin{align*}
   \sup_{z, w \in D_n; z \neq w} ||f||'_{Lip\alpha(\mathbb{C})} \cdot|z-w|^{1-\alpha} |w|^{-t-2-\lambda+\alpha} \leq C 2^{n(t+1+\lambda)}||f||'_{Lip\alpha(\mathbb{C})}.
\end{align*}

\bigskip

\noindent Thus

\begin{equation}
    \label{short2bound}
    \sup_{z, w \in D_n; z \neq w}  \dfrac{|x^{1-\lambda} f(w)| \cdot| \sum_{k=0}^{t}  w^{k-t-1} ((z-x)^{-k-1}-  (w-x)^{-k-1})| }{|z-w|^{\alpha}} \leq 2^{n(t+1+\lambda)}||f||'_{Lip\alpha(\mathbb{C})}.
\end{equation}

\bigskip

\noindent By applying \eqref{short1bound} and \eqref{short2bound} it follows that  \eqref{long2} is bounded by $C 2^{n(t+1+\lambda)}||f||'_{Lip\alpha(\mathbb{C})}$ and it follows from this and \eqref{1bound} that  

\begin{equation*}
    \left|x^{1-\lambda} \int_{\partial D_n} f(z) \sum_{k=0}^{t}  z^{k-t-1} (z-x)^{-k-1} dz \right| \leq C 2^{n(t+1+\lambda)}M_*^{1+\alpha}(D_n \setminus U)||f||'_{Lip\alpha(\mathbb{C})}.
\end{equation*}

\bigskip

\noindent Since Hausdorff content is monotone, $M_*^{1+\alpha}(D_n \setminus U) \leq M_*^{1+\alpha}(A_n \setminus U)$ and hence by the hypothesis of the theorem,

\begin{align*}
    \left|x^{1-\lambda} \sum_{n=M}^N\int_{\partial D_n} f(z) \sum_{k=0}^{t}  z^{k-t-1} (z-x)^{-k-1} dz \right| &\leq C \sum_{n=1}^{\infty} 2^{n(t+1+\lambda)}M_*^{1+\alpha}(A_n \setminus U)||f||'_{Lip\alpha(\mathbb{C})}\\
    &\leq C||f||'_{Lip\alpha(\mathbb{C})}.
\end{align*}

\noindent and thus it follows from this and \eqref{Mterm} that 

\begin{align*}
    \dfrac{|f^{(t)}(x) -f^{(t)}(0)|}{|x|^{\lambda}} \leq C||f||_{Lip\alpha(\mathbb{C})}
\end{align*}

\bigskip

\noindent for all $f\in A_{\alpha}(U)$ and $x \in J$.

\end{proof}
\section{Representing measures and bounded point derivations}

We now prove Theorem \ref{main2}.

\begin{proof}
As before we will assume that $x_0 = 0$ and $X$ is contained in the ball $\{z: |z| < \frac{1}{2}\}$. Choose a sequence $\epsilon_n \to 0$ such that $\displaystyle \sum_{n=1}^{\infty} 2^{(t+\lambda+1)n} \epsilon_n M_*^{1+\alpha}(A_n \setminus U) = \infty$ and $2^{(t+\lambda+1)n} \epsilon_n M_*^{1+\alpha}(A_n \setminus U) \leq 1$ for all $n$. Then for each integer $N$, there exists an integer $M >N$ such that

\begin{equation*}
    1 \leq \sum_{n=N}^M 2^{(t+\lambda+1)n}\epsilon_n M_*^{1+\alpha}(A_n \setminus U) \leq 2.
\end{equation*}

\bigskip

\noindent By Frostman's Lemma \cite[\textbf{2.2}]{Lord}, for each integer $n$ with $N \leq n \leq M$ there exists a positive measure $\nu_n$ with support on $A_n \setminus U$ such that $\int  \nu_n = C \epsilon_n M_*^{1+\alpha}(A_n \setminus U)$, where the constant $C>0$ and does not depend on $n$ or $U$. Now define a function $f_n$ by 

\begin{equation*}
    f_n(z) = \int  \dfrac{d\nu_n(\zeta)}{z-\zeta}.
\end{equation*}

\bigskip

\noindent $f_n$ belongs to $A_{\alpha}(U)$ and is analytic off $A_n$. In addition, it follows from Lemma \ref{decay} that $|f_n(z)| \leq C 2^{-\lambda n}$ and if $z\notin A_{n-1} \cup A_n \cup A_{n+1}$, then $|f_n(z)| \leq \dfrac{C M_*^{1+\alpha}(A_n \setminus U)}{dist(z, A_n)}$.

\bigskip

Now define $g_N(z)$ by 

\begin{equation*}
    g_N(z) = |z|^{\lambda} z^{t+1} \sum_{n=N}^{M} 2^{(t+ \lambda +1)n} f_n(z)
\end{equation*}

\bigskip

\noindent and consider the sequence $\{g_N\}_{1}^{\infty}$. We will show that this sequence is uniformly bounded on the unit disk.

\bigskip

Suppose that $z \in A_j$ for some positive integer $j$. By Lemma \ref{decay}, if $n \neq j-1, j, j+1$ then 

\begin{equation*}
    |f_n(z)| \leq C 2^j M_*^{1+\alpha}(A_n \setminus U)
\end{equation*}

\bigskip

\noindent and if $n = j-1, j, j+1$ then 

\begin{equation*}
    |f_n(z)| \leq C 2^{-\lambda n} \leq C
\end{equation*}

\bigskip

\noindent since $n$ is a positive integer. From these estimates it follows that

\begin{equation*}
    |g_N(z)| \leq |z|^{\lambda + t +1} \left(3C \cdot 2^{(t+\lambda+1)j}+ \sum_{n=N}^{M} C 2^{(t+\lambda+1)n} 2^j M_*^{1+\alpha}(A_n \setminus U)\right).
\end{equation*}

\bigskip \noindent Since $z \in A_j$, $|z|^{\lambda +t+1} \leq C 2^{-(t+\lambda+1)j}$ and

\begin{align*}
    |g_N(z)| & \leq 3C+ |z|^{\lambda + t} 2C.
\end{align*}

\bigskip

\noindent Thus $\{g_N(z)\}$ is uniformly bounded on the unit disk. Next define $h_N(z)$ by $h_N(z) = |z|^{-\lambda}z g_N(z)$, and consider the sequence $\{h_N(z)\}$. Because $g_N(z)$ is uniformly bounded on the unit disk, so also is $h_N(z)$ and since $h_N(z)$ is analytic outside the ball $\{|z| \leq 2^{-N}\}$, a subsequence (also denoted $\{h_N\}$) converges pointwise on $\mathbb{C}\setminus \{0\}$ to a function $h$ which is also analytic on $\mathbb{C}\setminus \{0\}$. Moreover, $g_N$ is uniformly bounded near $0$ for each $N$ and hence $h(0) = 0$. Thus $h$ is entire. 

\bigskip

Let $k_N(z)$ be defined by $k_N(z) = z^{-t-2}h_N(z)$. Then 

\begin{align*}
    k_N'(\infty) &= \lim_{z \to \infty} z k_N(z)\\
    &= \lim_{z \to \infty} z^{-t-1} h_N(z)\\
    &= \lim_{z \to \infty} |z|^{-\lambda} z^{-t} g_N(z)\\
    &= \lim_{z \to \infty} z \sum_{n=N}^M 2^{(t+\lambda+1)n}f_n(z).
\end{align*}

\bigskip

\noindent Since $\displaystyle \lim_{z \to \infty} zf_n(z) = \int \nu_n = C \epsilon_n M_*^{1+\alpha}(A_n \setminus U)$, it follows that $0 < C \leq |k_N'(\infty)| \leq 2C$. Thus passing to a second subsequence, still denoted by $\{k_N'\}$, we find that $\{k_N'(\infty)\} $ converges to $\beta$ for some $C \leq \beta \leq 2C$. Thus $\displaystyle \lim_{z \to \infty} z k_N(z) = \beta$ and hence $g_N(z)$ converges pointwise to $\beta |z|^{\lambda}z^{t}$.

\bigskip

 Assume that there exists a measure $\mu$ which represents $0$ on $A_{\alpha}(U)$ such that $\mu^{t+\lambda} <  \infty$. Then $|z|^{-\lambda}z^{-t}\mu$ is a finite measure. Let $L_N(z) = |z|^{-\lambda}g_N(z)$. Then $L_N(z)$ is analytic in a neighborhood of $0$ and belongs to $A_{\alpha}(U)$. Hence

\begin{align*}
    0 = L_N^{(t)}(0) &= t! \int \dfrac{L_N(z)}{z^t} d\mu(z) \\
    &= t! \int \dfrac{g_N(z)}{|z|^{\lambda}z^t} d\mu(z) \\
    &\to t! \int \beta d\mu(z) = t! \cdot \beta,
\end{align*}

\bigskip

\noindent which is a contradiction. Hence $\mu^{t+\lambda} = \infty$.

\end{proof}

\end{document}